\theoremstyle{plain}
\newtheorem{thm}{Theorem}[section]
\newtheorem{lem}[thm]{Lemma}
\theoremstyle{definition}
\theoremstyle{remark}
\newtheorem{rem}[thm]{Remark}
\numberwithin{equation}{section}
\newcommand{\bw}{\mathbf{w}}
\newcommand{\ts}{\mathsf{s}}
\newcommand{\cd}{\mathscr{D}}
\newcounter{const}
\definecolor{lime}{HTML}{A6CE39}
\DeclareRobustCommand{\orcidicon}{%
  \begin{tikzpicture}[x=1em,y=1em,baseline=-0.05em]
    \draw[lime, fill=lime] (0,0) circle [radius=0.16]
      node[white] { \tiny ID};
    \draw[white, fill=white] (-0.0625,0.095) circle [radius=0.007];
  \end{tikzpicture}\hspace{-2mm}%
}
\newcommand{\orcidB}{\href{https://orcid.org/\orcidauthorB}{\orcidicon}}
\title[Fujita Phenomenon]{\textsc{On the Fujita Phenomenon for a Forced Spatio-Temporal Fractional Diffusion Equation}}
\author[ R. Ben Belgacem and M. Majdoub]{Rihab Ben Belgacem and Mohamed Majdoub\orcidB{}}
\address[R. Ben Belgacem]{Universit\'e de Tunis El Manar, Facult\'e des Sciences de Tunis, D\'epartement de math\'ematiques, Laboratoire \'equations aux d\'eriv\'ees partielles (LR03ES04), 2092 Tunis, Tunisie.}
\email{\sl {\textcolor{blue}{rihabbenbelgacem662@gmail.com  }}}
\address[M. Majdoub]{Department of Mathematics, College of Science, Imam Abdulrahman Bin Faisal University, P. O. Box 1982, Dammam, Saudi Arabia.}
\address[M. Majdoub]{Basic and Applied Scientific Research Center, Imam Abdulrahman Bin Faisal University, P.O. Box 1982, 31441, Dammam, Saudi Arabia.}
\email{\sl {\textcolor{blue}{mmajdoub@iau.edu.sa}}}
\email{\sl {\textcolor{blue}{med.majdoub@gmail.com}}}
\email{\sl {\textcolor{blue}{mohamed.majdoub@fst.rnu.tn}}}
\subjclass[2020]{Primary: 35R11, 35K58.
Secondary: 35B44, 35B33, 35K15.}
\keywords{Time--fractional diffusion equation; fractional Laplacian;
Fujita phenomenon; critical exponent; finite-time blow-up;
global existence; external forcing.}
\begin{document}
\begin{abstract}
We investigate the Cauchy problem for a semilinear spatio--temporal fractional diffusion equation with a time-dependent forcing term:
\[
\partial_t^\alpha u + (-\Delta)^{\ts} u = |u|^p + t^{\sigma}\,\bw(x),
\quad (t,x) \in (0,\infty) \times \mathbb{R}^N,
\]
where $\alpha,\ts \in (0,1)$, $\sigma > -\alpha$, and $\bw$ is a given continuous function.
Here $\partial_t^\alpha$ denotes the Caputo fractional derivative.
Our main results are threefold.
First, we establish local-in-time existence of mild solutions and prove finite-time blow-up in the subcritical regime, under the positivity condition
\[
\int\limits_{\mathbb{R}^N} \bw(x)\,dx > 0.
\]
Second, in the supercritical case $-\alpha < \sigma < 0$, we prove the global existence of solutions for sufficiently small initial data and forcing term, and we identify the corresponding critical exponent as
\[
p_F=\frac{N\alpha-2\ts\sigma}{N\alpha-2\ts(\alpha+\sigma)}.
\]
Finally, within this supercritical range, we obtain a more robust global existence result under weaker assumptions that require only local smallness and controlled growth of the data. To the best of our knowledge, a sharp Fujita-type threshold for fully spatio-temporal fractional diffusion equations with time-growing external forcing has not been previously established.
\end{abstract}

\date{\today}

	\maketitle
\setcounter{tocdepth}{1}



\section{Introduction and main results}
\label{sec:intro}
In this paper, we investigate the Cauchy problem for a spatio-temporal fractional diffusion equation subject to an external forcing term:
\begin{equation}
\begin{cases}
\partial_t^\alpha u + (-\Delta)^\ts u = |u|^p + t^\sigma \bw(x) & \text{in } (0, \infty) \times \mathbb{R}^N, \\
u(0, x) = u_0(x) \geq 0 & \text{in } \mathbb{R}^N,
\end{cases}
\label{main}
\end{equation}
where $\alpha, \ts \in (0,1)$, $\sigma > -1$, $p > 1$, and $\bw$ is a given continuous function. In what follows, $\partial_t^\alpha$ for $\alpha \in (0,1)$ is the Caputo fractional derivative,
\[
    \partial_t^\alpha u(t,x) = \frac{1}{\Gamma(1-\alpha)} \int_0^t (t-\tau)^{-\alpha} \partial_\tau u(\tau,x) \, d\tau,
\]
and $(-\Delta)^\ts$ for $\ts \in (0,1)$ is the fractional Laplacian, defined spectrally as $(-\Delta)^\ts u := \mathcal{F}^{-1}\big( |\xi|^{2\ts} \mathcal{F}(u) \big)$ with $\mathcal{F}$ denoting the Fourier transform \cite{Book-FC, M, Guide, Vald}.

The framework of fully nonlocal equations, which combines fractional time derivatives with fractional spatial operators, offers a powerful approach for capturing phenomena characterized by long-range interactions and memory effects. Such models naturally occur in a variety of physical settings, including plasma physics and turbulent transport where anomalous diffusion arises from long-range correlations in space and time \cite{Castillo2004, Castillo2005} geophysical flows and atmospheric dynamics, where memory and large scale interactions play a fundamental role \cite{Metzler2000}, as well as in financial mathematics for asset price dynamics incorporating long-term memory and jump processes \cite{Cartea2007}.

From an analytical point of view, identifying the critical exponent that separates global-in-time existence from finite-time blow-up is a central theme. The classic starting point is Fujita's seminal work \cite{fujita, Fuj69} on the semi-linear heat equation, which corresponds in our framework to the case $\alpha = 1$, $\ts = 1$ and $\bw\equiv 0$. He established that for $1 < p < 1 + \frac{2}{N}$, any solution resulting from a non-trivial initial condition $u_0 \not\equiv 0$ blows up in finite time, whereas for $p > 1 + \frac{2}{N}$, global solutions exist if the initial condition is sufficiently small. The critical value $p_F = 1 + \frac{2}{N}$ was subsequently incorporated into the blow-up regime by Hayakawa \cite{Hay, KST77}. These fundamental results are detailed in Levine's summary \cite{DL}.

Subsequently, the research then focused on specific settings. For $\alpha=1$ and $\sigma=0$, the problem was analyzed in \cite{Beri-arXiv}, while a more general framework with $\alpha=1$ was later developed in \cite{Rihab2}. Related nonlocal configurations involving different fractional diffusion operators were also studied in \cite{Rihab1}.

In the classical setting corresponding to $\alpha=\ts=1$, there are many related works in the literature, notably \cite{Opuscula, BLZ, JKS, MM}. In particular, for $\sigma\in(-1,0)$, the critical exponent admits the explicit expression

$$p_F=\frac{N-2\sigma}{\,N-2\sigma-2\,}, $$

which coincides with the result \cite[Theorem 1.1, (1.8)]{JKS}. The case $\alpha=1$ with an integer spatial order $\ts\geq1$ has also been studied in detail in \cite{Majd}.

The Fujita phenomenon was later generalized beyond the classical framework. For Problem~\eqref{main} without a forcing term, with $\ts = 1$ and $0 < \alpha < 1$, Zhang and Sun \cite{ZhangSun} determined the analogous critical exponent $p_F = 1 + \frac{2}{N}$. For the fully fractional situation, Kirane et al.~\cite{KLT} analyzed the more general equation
\begin{equation}
\partial_t^\alpha u + (-\Delta)^{\ts} u = h(x,t)\,|u|^{p}, 
\label{STFE}
\end{equation}
under the assumption $h(x,t) \geq C_h |x|^\sigma t^\rho$. They showed that no global solutions exist whenever
\[
1 < p \leq 1 + \frac{\alpha(2\ts + \sigma) + 2\ts \rho}{\alpha N + 2\ts(1 - \alpha)},
\]
thereby extending the Fujita-type blow-up mechanism to equations involving both time-fractional and space-fractional operators.

This direction of research fits within a broad program aimed at characterizing critical exponents for nonlinear evolution equations. In particular, Guedda and Kirane \cite{Kirane1, Kirane2} developed a unified methodology for proving nonexistence of global solutions across a wide class of problems. Their framework includes the case $\alpha = 1$ in \eqref{STFE}, and clarifies how the interplay between the coefficient $h$, the parameters $p$, $\ts$, and the spatial dimension $N$ determines the critical threshold. Their approach, relying on carefully chosen scaled test functions, yields a powerful and flexible tool for identifying critical exponents influenced by both the diffusion operator and the growth of the nonlinearity.

For the unforced version of our main problem \eqref{main}, the critical exponent can be inferred from the scaling properties of the equation. If $u(t,x)$ is a solution, then for any $\lambda > 0$ the rescaled function
\[
u_\lambda(t,x) := \lambda^{\frac{2\ts}{p-1}} 
u\!\left(\lambda^{\frac{2\ts}{\alpha}} t,\, \lambda x\right)
\]
is again a solution.  
The corresponding scale-invariant Lebesgue space for the initial data is $L^{p_c}(\mathbb{R}^N)$, where
\begin{equation}
\label{p-c}
p_c = \frac{N(p-1)}{2\ts}.
\end{equation}
This leads to the Fujita-type critical exponent
\[
p_F := 1 + \frac{2\ts}{N},
\]
which reduces to the classical value when $\alpha = \ts = 1$.

It is well known that critical exponents are highly sensitive to the underlying structure of the equation. In particular, the presence of additional mechanisms may significantly modify the threshold separating global-in-time existence from finite-time blow-up. For the classical heat equation, for instance, it has been shown that forcing terms can shift the Fujita exponent \cite{BLZ}. Likewise, geometric effects may alter the critical value, as demonstrated by Bandle and Levine \cite{BL89} in the case of conical domains.

A central contribution of the present work is the precise characterization of the impact of an external forcing term on the qualitative behavior of solutions. Our analysis relies on the general criticality framework developed in \cite{KLT}. In this setting, we investigate how the critical blow-up exponent associated with the unforced problem,
\[
p_F = 1 + \frac{2\ts\alpha}{N\alpha + 2\ts(1 - \alpha)},
\]
is modified by the inclusion of an external source.

Our main results identify a sharp critical exponent for the forced equation, which accurately delineates the transition between global-in-time existence and finite-time blow-up. In particular, they show that the presence of a forcing term accelerates the blow-up mechanism when compared to the unforced case, in the sense that the range of exponents $p$ for which solutions necessarily blow up in finite time becomes strictly larger. More precisely, space- and time-dependent forcing effects tend to counterbalance the regularizing influence of fractional diffusion, as governed by the operators $\partial_t^\alpha$ and $(-\Delta)^\ts$.

From a modeling perspective, the incorporation of a time-dependent forcing term of the form $t^{\sigma} \bw(x)$ is well motivated in physical systems exhibiting anomalous diffusion and long-memory effects. Time-fractional diffusion equations involving the Caputo derivative are widely employed to describe subdiffusive transport in disordered media, plasma physics, and turbulent transport, where nonlocal temporal effects capture the persistent influence of past states \cite{Castillo2004, Castillo2005, Diethelm, KST}. In such contexts, external sources typically act continuously in time rather than instantaneously, giving rise to forcing mechanisms whose intensity evolves according to a power law. Closely related phenomena also occur in geophysical flows and atmospheric dynamics, where sustained energy injection interacts with nonlocal diffusion processes \cite{Metzler2000}. The exponent $\sigma$ thus encodes the temporal balance between external forcing and fractional dissipation, and its interplay with the nonlinear reaction term can have a decisive impact on the qualitative behavior of solutions. 
To rigorously quantify the effect of external forcing on the classical Fujita threshold and on the global dynamics of the system, it is necessary to first establish a local well-posedness theory. To this end, we prove the existence and uniqueness of mild solutions in suitable Lebesgue spaces, which provides the analytical foundation for the subsequent study of finite-time blow-up and global-in-time existence. Accordingly, we begin our analysis by deriving a local well-posedness result in the Lebesgue space framework under appropriate assumptions on the data.

\begin{thm}
\label{thm:Loc-Exis}
Let $p>1$, $\alpha, \ts \in (0,1)$, $\sigma > -\alpha$, and $q \in (p_c, \infty]$ with $q \geq p$, where the critical exponent $p_c$ is given by \eqref{p-c}. Assume that the initial data $u_0$ and the forcing term $\bw$ both belong to $L^q(\mathbb{R}^N)$.  
Then, there exists a time $T>0$ such that Problem \eqref{main} admits a unique solution 
\[
u \in L^\infty(0, T; L^q(\mathbb{R}^N)).
\]
\end{thm}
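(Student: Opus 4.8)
I would prove Theorem~\ref{thm:Loc-Exis} by a Banach fixed-point argument applied to the Duhamel (mild) formulation of \eqref{main} in the space $L^\infty(0,T;L^q(\mathbb{R}^N))$. Recall that, for $\alpha\in(0,1)$, the solution of the linear Cauchy problem $\partial_t^\alpha v+(-\Delta)^\ts v=f$, $v(0)=v_0$, is represented as
\[
v(t)=\mathcal P_\alpha(t)v_0+\int_0^t (t-\tau)^{\alpha-1}\,\mathcal S_\alpha(t-\tau)f(\tau)\,d\tau ,
\]
where $\mathcal P_\alpha(t)$ and $\mathcal S_\alpha(t)$ are the Mittag--Leffler type operators obtained by subordinating the fractional heat semigroup $\{e^{-t(-\Delta)^\ts}\}_{t\ge 0}$ against the Mainardi function. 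The two facts I need about them are the $L^q$-contractivity $\|\mathcal P_\alpha(t)\phi\|_{L^q}+\|\mathcal S_\alpha(t)\phi\|_{L^q}\le C\|\phi\|_{L^q}$ and, more generally, the $L^r$--$L^q$ smoothing estimate
\[
\|\mathcal S_\alpha(t)\phi\|_{L^q}\le C\,t^{-\frac{\alpha N}{2\ts}\left(\frac1r-\frac1q\right)}\|\phi\|_{L^r},\qquad 1\le r\le q\le\infty,
\]
inherited from the corresponding bound for the fractional heat kernel. A mild solution of \eqref{main} on $[0,T]$ is then a fixed point of the map
\[
\Phi(u)(t):=\mathcal P_\alpha(t)u_0+\int_0^t(t-\tau)^{\alpha-1}\mathcal S_\alpha(t-\tau)\big(|u(\tau)|^p+\tau^\sigma\bw\big)\,d\tau .
\]

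\textbf{Self-mapping on a ball.} Fix $R>0$ and work in $X_{T,R}:=\{u\in L^\infty(0,T;L^q):\|u\|_X:=\sup_{0<t<T}\|u(t)\|_{L^q}\le R\}$, a complete metric space for the sup-in-time norm. The linear term is controlled by contractivity, $\|\mathcal P_\alpha(t)u_0\|_{L^q}\le C\|u_0\|_{L^q}$. For the forcing term, since $\bw\in L^q$, contractivity gives
\[
\Big\|\int_0^t(t-\tau)^{\alpha-1}\mathcal S_\alpha(t-\tau)(\tau^\sigma\bw)\,d\tau\Big\|_{L^q}\le C\|\bw\|_{L^q}\int_0^t(t-\tau)^{\alpha-1}\tau^\sigma\,d\tau=C\,\mathrm{B}(\alpha,\sigma+1)\,t^{\alpha+\sigma}\|\bw\|_{L^q},
\]
where the Beta integral is finite because $\sigma>-\alpha>-1$, and the power $t^{\alpha+\sigma}$ vanishes as $t\to0$ since $\sigma>-\alpha$. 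For the nonlinear term, the hypothesis $q\ge p$ ensures $|u(\tau)|^p\in L^{q/p}$ with $\||u(\tau)|^p\|_{L^{q/p}}=\|u(\tau)\|_{L^q}^p$; applying the smoothing estimate with $r=q/p$,
\[
\Big\|\int_0^t(t-\tau)^{\alpha-1}\mathcal S_\alpha(t-\tau)|u(\tau)|^p\,d\tau\Big\|_{L^q}\le C\,\|u\|_X^p\int_0^t(t-\tau)^{\alpha-1-\frac{\alpha N(p-1)}{2\ts q}}\,d\tau .
\]
The last integral is finite exactly when $\alpha-1-\frac{\alpha N(p-1)}{2\ts q}>-1$, i.e.\ $q>\frac{N(p-1)}{2\ts}=p_c$ (our standing hypothesis), and it then equals $c_1 t^{\alpha(1-p_c/q)}$, a positive power of $t$. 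Altogether
\[
\|\Phi(u)\|_X\le C\|u_0\|_{L^q}+c_1 T^{\alpha(1-p_c/q)}R^p+c_2 T^{\alpha+\sigma}\|\bw\|_{L^q},
\]
so choosing first $R:=2C\|u_0\|_{L^q}+1$ and then $T$ small enough makes the right-hand side at most $R$; hence $\Phi$ maps $X_{T,R}$ into itself.

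\textbf{Contraction and conclusion.} For $u,v\in X_{T,R}$ only the nonlinear term survives in $\Phi(u)-\Phi(v)$; using $\big||a|^p-|b|^p\big|\le p\big(|a|^{p-1}+|b|^{p-1}\big)|a-b|$ together with Hölder's inequality in $x$,
\[
\big\||u(\tau)|^p-|v(\tau)|^p\big\|_{L^{q/p}}\le 2pR^{p-1}\|u(\tau)-v(\tau)\|_{L^q},
\]
and the same time integral as above yields $\|\Phi(u)-\Phi(v)\|_X\le 2p\,c_1 R^{p-1}T^{\alpha(1-p_c/q)}\|u-v\|_X$. Shrinking $T$ further so that this constant is $\le\frac12$, the Banach fixed-point theorem produces a unique fixed point $u\in X_{T,R}$, which is the desired solution in $L^\infty(0,T;L^q(\mathbb{R}^N))$. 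Uniqueness within the whole class $L^\infty(0,T;L^q)$ then follows by applying the same difference estimate on a possibly shorter initial interval, on which any two solutions sharing the data $u_0$, $\bw$ are trapped in a common ball, and iterating forward in time.

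\textbf{Main difficulty.} The argument is essentially exponent bookkeeping once the $L^r$--$L^q$ smoothing estimates for $\mathcal P_\alpha$ and $\mathcal S_\alpha$ are in hand; the one genuinely sharp point is ensuring local integrability of the Duhamel kernel $(t-\tau)^{\alpha-1-\frac{\alpha N(p-1)}{2\ts q}}$, which is precisely equivalent to the restriction $q>p_c$ and dictates the exact range $q\in(p_c,\infty]$ together with the requirement $q\ge p$ needed to place $|u|^p$ in $L^{q/p}$. The endpoint $q=\infty$ requires no separate treatment, since there the spatial smoothing exponent is $0$ and the Duhamel integral reduces to $\int_0^t(t-\tau)^{\alpha-1}\,d\tau$.
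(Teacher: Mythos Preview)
Your proof is correct and follows essentially the same approach as the paper: a Banach fixed-point argument in a closed ball of $L^\infty(0,T;L^q)$, with the same three estimates (contractivity for the linear part, the Beta integral $\int_0^t\tau^\sigma(t-\tau)^{\alpha-1}\,d\tau$ for the forcing term requiring $\sigma>-\alpha$, and the $L^{q/p}\to L^q$ smoothing for the nonlinearity requiring $q>p_c$). The only cosmetic difference is that the paper writes the Duhamel formula through the kernels $Z$ and $Y$ of Lemma~\ref{lem:kernels} rather than the subordinated operators $\mathcal P_\alpha,\mathcal S_\alpha$---the paper itself remarks that the two formulations are equivalent---and for uniqueness the paper invokes the singular Gronwall inequality (Lemma~\ref{lem:singular-Gronwall}) instead of your iteration argument, but both are standard and yield the same conclusion.
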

 
\begin{rem}
~\rm
\begin{enumerate}[label=(\roman*)]

\item 
Local existence may also be obtained via semigroup methods.  
More precisely, one can introduce the solution operators  
\( \mathbb{P}_{\alpha,\ts}(t) \) and \( \mathbb{S}_{\alpha,\ts}(t) \) through the subordination principle, following \cite{Bazhlekova2000}:
\begin{align*}
\mathbb{P}_{\alpha,\ts}(t) u_{0}
    &= \int_{0}^{\infty} \phi_{\alpha}(\theta)\, 
       T_{\ts}(t^{\alpha}\theta)\, u_{0}\, d\theta,\\
\mathbb{S}_{\alpha,\ts}(t) \varphi
    &= \alpha \int_{0}^{\infty} \theta\, \phi_{\alpha}(\theta)\, 
       T_{\ts}(t^{\alpha}\theta)\, \varphi\, d\theta,
\end{align*}
where \( T_{\ts}(t) = e^{-t(-\Delta)^{\ts}} \) denotes the fractional heat semigroup and  
\( \phi_{\alpha} \) is the Wright-type function (see \cite{Mainardi1996}).  
This approach yields mild solutions in \( C([0,T], L^{q}(\mathbb{R}^{N})) \).  
In the present work, however, we rely on the Duhamel representation involving the kernels \( Z \) and \( Y \), which is better adapted to capturing spatial decay and regularity effects.

\item 
In the case \( \ts = 1 \) and \( \bw = 0 \), Zhang and Sun \cite{ZhangSun} established local well-posedness for initial data in  
\( C_{0}(\mathbb{R}^{N}) \) and provided a complete characterization of the critical exponent governing the transition between finite-time blow-up and global existence.

\item 
The classical situation \( \alpha = \ts = 1 \) was previously studied in \cite{JKS} under the assumptions  
\( \sigma > -1 \) and \( \sigma \neq 0 \).  
In that setting, both local well-posedness and the corresponding critical exponent were rigorously derived.

\end{enumerate}
\end{rem}

Our next result addresses the nonexistence of global solutions to~\eqref{main}. The precise statement is as follows:
\begin{thm}\label{thm:blowup}  
Let $N \geqslant 1$ and $\alpha,\ts \in (0,1)$. 
Assume further that ${\mathbf w} \in C_0(\mathbb{R}^N) \cap L^1(\mathbb{R}^N)$ satisfies
\begin{equation}
\label{Ass-Forced}
    \int\limits_{\mathbb{R}^N} {\mathbf w}(x)\, dx > 0.
\end{equation}
\begin{enumerate}[label=(\roman*)]
    \item If $-\alpha<\sigma \leq 0$, $2\ts < N$, and 
    \begin{equation}
    \label{Fuji-forced}
        1 < p < p^* \vcentcolon=\frac{N\alpha-2\ts\sigma}{N\alpha-2\ts(\alpha+\sigma)},
    \end{equation}
    then problem~\eqref{main} admits no global weak solution.
    \item If $\sigma > 0$, then the same conclusion holds for every $p > 1$.
\end{enumerate}
\end{thm}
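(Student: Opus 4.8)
The strategy is the rescaled test-function (nonlinear capacity) method of Mitidieri--Pohozaev, in the fractional space-time form developed by Guedda--Kirane \cite{Kirane1} and Kirane et al.\ \cite{KLT}. Assume, for contradiction, that \eqref{main} admits a global weak solution $u$. Testing the equation against a nonnegative separated test function $\varphi(t,x)=\varphi_1(t)\varphi_2(x)$ with $\varphi_1(T)=0$ and integrating by parts — in time the Caputo derivative transfers onto $\varphi_1$ as the right Riemann--Liouville derivative $D^{\alpha}_{t\mid T}$ and produces an initial term $\frac{1}{\Gamma(1-\alpha)}\int_{\mathbb{R}^N}u_0(x)\big(\int_0^T t^{-\alpha}\varphi_1(t)\,dt\big)\varphi_2(x)\,dx$, while in space $(-\Delta)^{\ts}$ is self-adjoint and moves onto $\varphi_2$ — one obtains, after bounding the right side by absolute values, applying Young's inequality $|u|\,|\Psi|\le\tfrac14|u|^p\varphi+C\,\varphi^{-\frac1{p-1}}|\Psi|^{p'}$ to the two source terms $\Psi=D^\alpha_{t\mid T}\varphi$ and $\Psi=(-\Delta)^{\ts}\varphi$, and absorbing the resulting $\tfrac12\int\!\!\int|u|^p\varphi$ on the left,
\[
\int_0^\infty\!\!\int_{\mathbb{R}^N}t^\sigma\,\bw\,\varphi\,dx\,dt
\;\le\;
C\big(\mathcal{I}_1+\mathcal{I}_2\big)+\mathcal{I}_0 ,
\]
where $p'=\tfrac{p}{p-1}$, $\mathcal{I}_1=\int\!\!\int\varphi^{-\frac1{p-1}}|D^\alpha_{t\mid T}\varphi|^{p'}$, $\mathcal{I}_2=\int\!\!\int\varphi^{-\frac1{p-1}}|(-\Delta)^{\ts}\varphi|^{p'}$, and $\mathcal{I}_0=\big(\int_0^T t^{-\alpha}\varphi_1\big)\big|\int_{\mathbb{R}^N}u_0\varphi_2\big|$.

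Next, specialize to $\varphi_1(t)=(1-t/T)_+^{\eta}$ and $\varphi_2(x)=\phi(x/R)$ with $\phi(y)=(1+|y|^2)^{-\gamma}$, fixing $\eta>\alpha p'-1$ and $N/2<\gamma<\ts p+N/2$ (a nonempty range). Using $D^\alpha_{t\mid T}\varphi_1(t)=c_{\alpha,\eta}\,T^{-\alpha}(1-t/T)_+^{\eta-\alpha}$, the identity $p'-\tfrac1{p-1}=1$, the scaling relation $(-\Delta)^{\ts}\varphi_2(x)=R^{-2\ts}\big((-\Delta)^{\ts}\phi\big)(x/R)$, and the sharp decay estimate $|(-\Delta)^{\ts}\phi(y)|\lesssim(1+|y|)^{-N-2\ts}$ (which is exactly what makes $\int\phi^{-\frac1{p-1}}|(-\Delta)^{\ts}\phi|^{p'}$ converge in the above $\gamma$-window), a direct computation gives
\[
\mathcal{I}_1\asymp T^{\,1-\alpha p'}R^{N},\qquad
\mathcal{I}_2\asymp R^{\,N-2\ts p'}\,T,\qquad
\mathcal{I}_0\asymp T^{\,1-\alpha}\Big|\int_{\mathbb{R}^N}u_0\varphi_2\Big| .
\]
On the left, $\int_0^T t^\sigma\varphi_1\,dt=B(\sigma+1,\eta+1)\,T^{\sigma+1}$ (legitimate since $\sigma>-\alpha>-1$), while $\int_{\mathbb{R}^N}\bw\,\varphi_2\,dx\to\int_{\mathbb{R}^N}\bw\,dx>0$ by dominated convergence as $R\to\infty$; hence for $R$ large the left side is $\ge c_0\,T^{\sigma+1}$ with $c_0>0$.

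For part (i), impose the balance $R=T^{\alpha/(2\ts)}$, so $\mathcal{I}_1\asymp\mathcal{I}_2\asymp T^{\,1-\alpha p'+N\alpha/(2\ts)}$ and $\mathcal{I}_0\asymp T^{1-\alpha}|\int u_0|$; the key inequality reads $c_0\,T^{\sigma+1}\le C\,T^{\,1-\alpha p'+N\alpha/(2\ts)}+c\,T^{1-\alpha}$. Since $\sigma>-\alpha$ gives $\sigma+1>1-\alpha$, the $\mathcal{I}_0$-term is negligible as $T\to\infty$, and the inequality $\sigma+1>1-\alpha p'+\tfrac{N\alpha}{2\ts}$ is, upon substituting $p'=p/(p-1)$, equivalent to $p<p^*=\tfrac{N\alpha-2\ts\sigma}{N\alpha-2\ts(\alpha+\sigma)}$; the hypotheses $-\alpha<\sigma\le0$ and $2\ts<N$ guarantee $0<2\ts(\alpha+\sigma)\le 2\ts\alpha<N\alpha$, so the denominator is positive, $p^*>1$, and the equivalence is valid. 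Letting $T\to\infty$ with $1<p<p^*$ forces $c_0\le0$, a contradiction, which proves (i). For part (ii), since $\sigma>0$ it suffices to keep $R=R_0$ fixed (large enough that $\int\bw\,\varphi_2\ge\tfrac12\int\bw>0$) and let only $T\to\infty$: then $\mathcal{I}_1\asymp T^{1-\alpha p'}$, $\mathcal{I}_2\asymp T$ and $\mathcal{I}_0\asymp T^{1-\alpha}$ are each $o(T^{\sigma+1})$ for every $p>1$, so $c_0\,T^{\sigma+1}\le C(\mathcal{I}_1+\mathcal{I}_2)+\mathcal{I}_0$ fails as $T\to\infty$ — with no restriction on $p$ or on the dimension.

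The principal obstacle is the nonlocality of $(-\Delta)^{\ts}$: a compactly supported $\varphi_2$ is unusable, since $(-\Delta)^{\ts}\varphi_2$ does not vanish off the support of $\varphi_2$, and there the weight $\varphi_2^{-1/(p-1)}$ in $\mathcal{I}_2$ would be infinite. This forces the globally positive profile $\phi(y)=(1+|y|^2)^{-\gamma}$ and the far-field bound $|(-\Delta)^{\ts}\phi(y)|\lesssim(1+|y|)^{-N-2\ts}$ — the sharp rate, not the naive $(1+|y|)^{-2\gamma}$ — together with its compatibility with the scaling $(-\Delta)^{\ts}[\phi(\cdot/R)]=R^{-2\ts}[(-\Delta)^{\ts}\phi](\cdot/R)$; establishing this is the technical heart of the computation and is what pins down the window $N/2<\gamma<\ts p+N/2$. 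A secondary bookkeeping point is the initial-data term $\mathcal{I}_0$: one must check it is finite for the class of $u_0$ admitted in the definition of weak solution and, crucially, that $\sigma>-\alpha$ keeps it strictly subordinate to the forcing as $T\to\infty$.
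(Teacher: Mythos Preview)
Your argument is correct and follows the same nonlinear-capacity strategy as the paper, but the implementation differs in three places worth recording. The paper takes a \emph{compactly supported} spatial cutoff $\psi(|x|/R)$ and couples the scales from the start via $\psi_R(t,x)=\psi(t/R^{2\ts/\alpha})\psi(|x|/R)$, reading both conclusions from the sign of the single exponent $\gamma=N-\tfrac{2\ts p}{p-1}-\tfrac{2\ts\sigma}{\alpha}$; you instead use the globally positive profile $(1+|x/R|^2)^{-\gamma}$, keep $T$ and $R$ independent, balance them for part~(i), and freeze $R$ for part~(ii). The concern you raise about compact support is genuine: with $\psi$ vanishing for $|x|\ge 2R$ but $(-\Delta)^{\ts}\psi$ decaying only like $|x|^{-N-2\ts}$, the integral $\int \psi^{-1/(p-1)}|(-\Delta)^{\ts}\psi|^{p'}$ that the paper writes down is formally infinite off the support, a point the paper does not address; your Bessel-type profile with the sharp decay $|(-\Delta)^{\ts}\phi|\lesssim(1+|y|)^{-N-2\ts}$ and the window $N/2<\gamma<\ts p'+N/2$ is precisely the standard repair. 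Second, the paper discards the initial-data boundary term by invoking $u_0\ge0$ together with the sign of $\cd^\alpha_{t|T}\psi_R$, an assumption not present in the theorem's hypotheses; you instead keep $\mathcal{I}_0\asymp T^{1-\alpha}$ and show it is strictly subordinate to the forcing $T^{\sigma+1}$ thanks to $\sigma>-\alpha$, which is cleaner. Finally, your decoupled treatment of~(ii) is more transparent: with the paper's coupled scaling the exponent $\gamma$ is not automatically negative for every $p>1$ when $\sigma>0$, whereas fixing $R$ and sending $T\to\infty$ makes $\mathcal{I}_1,\mathcal{I}_2,\mathcal{I}_0=o(T^{\sigma+1})$ immediate for all $p>1$.
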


\begin{rem}
~\rm
\begin{enumerate}[label=(\roman*)]
    \item The condition $N > 2\ts$ ensures that the exponent $p^*$ defined in~\eqref{Fuji-forced} satisfies $p^* > 1$.

    \item A crucial factor for blow-up is the positivity of the total mass of $\bw$. In particular, when $\sigma > 0$, the temporal amplification enhances the effect of the forcing, thereby lowering the critical exponent $p^*$ beyond which solutions necessarily blow up.  
    \item These results emphasize the delicate balance between dissipation due to fractional diffusion and amplification induced by external forcing, highlighting the significant role of the temporal factor $t^\sigma$ in governing blow-up.
    \end{enumerate}
    \end{rem}

We now turn to the analysis of the global theory. Global existence is established under the assumption that the initial data is sufficiently small in the critical norm, and that the forcing term is suitably small in an appropriate functional norm.  The main result in this setting is stated as follows:
\begin{thm}
\label{thm:glob-exis-1}
Let $N\geq1$, $\alpha, \ts \in (0,1)$, and $-\alpha<\sigma \leq 0$. Assume that $p \geq \frac{N\alpha-2\ts\sigma}{N\alpha-2\ts(\alpha+\sigma)}$, where $ N>2\ts$. 
Let us define ${r_c}=\frac{N\alpha(p-1)}{2\ts\sigma(p-1)+2\ts\alpha p}$ and recall that $p_c$ is given by \eqref{p-c}. Then, for any initial data  $u_0\in L^{p_c}(\mathbb{R}^N)$ and $\bw \in L^{r_c}(\mathbb{R}^N)$, with the property that $\|u_0\|_{L^{p_c}} + \|\bw\|_{L^{r_c}}$ is sufficiently small, the equation \eqref{main} admits a global solution $u$ in time. 
\end{thm}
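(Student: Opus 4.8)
The plan is to realise a solution of \eqref{main} as a fixed point of its Duhamel map in a Kato-type space built on the critical exponent $p_c$ with a scaling-adapted time weight, and to close the argument via the Banach contraction principle under the smallness of the data. I would write the mild formulation as
\[
u(t)=Z(t)u_{0}+\int_{0}^{t}Y(t-\tau)\,|u(\tau)|^{p}\,d\tau+\int_{0}^{t}Y(t-\tau)\,\tau^{\sigma}\bw\,d\tau=:\Psi(u)(t),
\]
where $Z$ and $Y$ are the solution kernels associated with $\partial_{t}^{\alpha}+(-\Delta)^{\ts}$ produced by the subordination formula recalled after Theorem~\ref{thm:Loc-Exis} (with $Y$ absorbing the singular factor $(t-\tau)^{\alpha-1}$), so that the working tool is the family of space--time smoothing estimates
\[
\|Z(t)f\|_{L^{q}}\lesssim t^{-\frac{N\alpha}{2\ts}\left(\frac1r-\frac1q\right)}\|f\|_{L^{r}},\qquad \|Y(t)f\|_{L^{q}}\lesssim t^{\,\alpha-1-\frac{N\alpha}{2\ts}\left(\frac1r-\frac1q\right)}\|f\|_{L^{r}}\qquad(1\le r\le q\le\infty),
\]
coming from the Gaussian-type bounds for the fractional heat semigroup $T_{\ts}$ together with subordination. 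I would then fix an auxiliary exponent $q$ with $p_{c}<q\le p\,p_{c}$ and $q\ge p$, set $\beta:=\tfrac{N\alpha}{2\ts}\big(\tfrac1{p_{c}}-\tfrac1q\big)$, and note that $p_{c}=\tfrac{N(p-1)}{2\ts}$ forces $\beta\in(0,\alpha/p]$, hence $p\beta<1$; the admissible range of $q$ is nonempty under the standing hypotheses $N>2\ts$ and $p\ge p_F$, which together give $p_c>1$. The fixed point is sought in the complete metric space
\[
E:=\Big\{u:(0,\infty)\to L^{p_{c}}(\R^{N})\ :\ \|u\|_{E}:=\sup_{t>0}\|u(t)\|_{L^{p_{c}}}+\sup_{t>0}t^{\beta}\,\|u(t)\|_{L^{q}}<\infty\Big\}.
\]

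The estimates then split into three blocks. First, applying the smoothing bounds with $r=p_{c}$ gives $\|Z(\cdot)u_{0}\|_{E}\lesssim\|u_{0}\|_{L^{p_{c}}}$, the weight $t^{\beta}$ being precisely the scale-invariant one. Second, for the nonlinear term $\mathcal N(u)(t):=\int_{0}^{t}Y(t-\tau)|u(\tau)|^{p}\,d\tau$ one writes $\big\||u(\tau)|^{p}\big\|_{L^{m}}=\|u(\tau)\|_{L^{pm}}^{p}$ with $m=q/p$ (admissible since $1\le q/p\le p_{c}$, which is where $p_{c}<q\le p\,p_{c}$ enters), applies $Y:L^{q/p}\to L^{q}$ and $Y:L^{q/p}\to L^{p_{c}}$ together with Minkowski's integral inequality, and is left with time integrals $\int_{0}^{t}(t-\tau)^{\alpha-1-\delta}\tau^{-p\beta}\,d\tau$ with $0\le\delta<\alpha$ and $p\beta<1$, finite Beta integrals carrying exactly the power of $t$ needed to reproduce the $E$-norm; combined with $\big||a|^{p}-|b|^{p}\big|\le C\big(|a|^{p-1}+|b|^{p-1}\big)|a-b|$ and Hölder in space this yields $\|\mathcal N(u)\|_{E}\lesssim\|u\|_{E}^{p}$ and the Lipschitz estimate $\|\mathcal N(u)-\mathcal N(v)\|_{E}\lesssim\big(\|u\|_{E}^{p-1}+\|v\|_{E}^{p-1}\big)\|u-v\|_{E}$. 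Third, for the forcing term the crucial fact is that $r_{c}$ is tailored so that $\tfrac{N\alpha}{2\ts}\big(\tfrac1{r_{c}}-\tfrac1{p_{c}}\big)=\alpha+\sigma$ and $\tfrac{N\alpha}{2\ts}\big(\tfrac1{r_{c}}-\tfrac1q\big)=\alpha+\sigma+\beta$; applying $Y:L^{r_{c}}\to L^{p_{c}}$ and $Y:L^{r_{c}}\to L^{q}$ reduces $\int_{0}^{t}Y(t-\tau)\tau^{\sigma}\bw\,d\tau$ to the integrals $\int_{0}^{t}(t-\tau)^{-1-\sigma}\tau^{\sigma}\,d\tau$ and $\int_{0}^{t}(t-\tau)^{-1-\sigma-\beta}\tau^{\sigma}\,d\tau$, which I would control by the splitting $\int_{0}^{t}=\int_{0}^{t/2}+\int_{t/2}^{t}$: on $[0,t/2]$ the factor $(t-\tau)^{-1-\sigma}$ (resp. $(t-\tau)^{-1-\sigma-\beta}$) is bounded by its value at $\tau=0$, on $[t/2,t]$ the factor $\tau^{\sigma}$ is bounded by $(t/2)^{\sigma}$ (using $\sigma\le0$), and the resulting integrals converge because $\sigma>-\alpha$ — with $q$ taken close enough to $p_{c}$ that $\beta<-\sigma$ — giving $\big\|\int_{0}^{\cdot}Y(\cdot-\tau)\tau^{\sigma}\bw\,d\tau\big\|_{E}\lesssim\|\bw\|_{L^{r_{c}}}$. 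It is here that the hypothesis $p\ge p_{F}$ is used, being equivalent to $r_{c}\ge1$ (so that $L^{r_{c}}$ is a genuine Lebesgue space and $r_{c}<p_{c}$).

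Collecting the three blocks gives $\|\Psi(u)\|_{E}\le C_{0}\big(\|u_{0}\|_{L^{p_{c}}}+\|\bw\|_{L^{r_{c}}}\big)+C_{1}\|u\|_{E}^{p}$ and $\|\Psi(u)-\Psi(v)\|_{E}\le C_{1}\big(\|u\|_{E}^{p-1}+\|v\|_{E}^{p-1}\big)\|u-v\|_{E}$, so that if $\varepsilon:=\|u_{0}\|_{L^{p_{c}}}+\|\bw\|_{L^{r_{c}}}$ is sufficiently small, $\Psi$ maps the ball $\{u\in E:\|u\|_{E}\le2C_{0}\varepsilon\}$ into itself as a strict contraction, and its unique fixed point is the sought global mild solution, with $\|u\|_{E}\lesssim\varepsilon$; a routine further argument (splitting the Duhamel integrals and using the density of $C_{c}^{\infty}$ in $L^{p_{c}}$) upgrades it to $u\in C([0,\infty);L^{p_{c}})$ with $u(0)=u_{0}$. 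The step I expect to be the main obstacle is the third block: because the problem is critical there is no slack in any of the Duhamel time integrals, so the proof must simultaneously balance the admissible window $p_{c}<q\le p\,p_{c}$ for the auxiliary exponent, the smallness of $\beta$ needed to integrate the forced term near $\tau=t$, and the constraint $r_{c}\ge1$ (i.e. $p\ge p_{F}$), while the borderline behaviour of the forced Duhamel integral near $\tau=0$ and $\tau=t$ has to be tamed by the $\int_{0}^{t/2}+\int_{t/2}^{t}$ splitting together with the exact identities defining $r_{c}$.
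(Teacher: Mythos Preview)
Your proposal is correct and follows essentially the same route as the paper: a Banach fixed-point argument in the Kato-type space with time weight $t^{\beta}$, $\beta=\frac{\alpha}{p-1}-\frac{N\alpha}{2\ts q}$, using the same smoothing estimates for $Z,Y$ and the same scaling identity $\frac{N\alpha}{2\ts}\big(\frac1{r_c}-\frac1q\big)=\alpha+\sigma+\beta$ for the forcing term (together with the constraint $\beta<-\sigma$). The only cosmetic differences are that the paper works solely in the weighted $L^q$ space (without your extra $L^{p_c}$ component, which is not needed to close the contraction), and that the forcing Duhamel integral is evaluated directly as a Beta function via the substitution $\tau=t\theta$ rather than by your $\int_0^{t/2}+\int_{t/2}^t$ splitting.
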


\begin{rem}
\rm 
~\begin{enumerate}[label=(\roman*)]
    \item When $\sigma = 0$, the scaling properties imply that $L^{p_c}(\mathbb{R}^N)$ is the unique Lebesgue space with the following feature: if the initial data are small in this space and the forcing term $\bw$ is small in $L^{\frac{p_c}{p}}(\mathbb{R}^N)$, then a global solution exists in $L^q$.  

    \item In the absence of a forcing term (i.e., $\bw = 0$), the problem is addressed in \cite{CVPDE-2024}. There, global existence is established either under a strong condition, requiring the smallness of $\|u_0\|_{L^{p_c}(\mathbb{R}^N)}$, or under a weaker condition, relying on smallness of local averages of $u_0$.

    \item The global existence result can be extended to $L^q$ spaces outside the distinguished range, provided additional integrability conditions are imposed on $u_0$. In the critical case $p^* = 1 + \frac{2\ts}{N}$, the existence of global solutions depends on $\alpha$: for $\alpha \in (0,1)$, global solutions exist, whereas for $\alpha = 1$, every nonnegative nontrivial solution blows up in finite time.
\end{enumerate}
\end{rem}

When a forcing term depends on both time and space, a weaker criterion for the global existence of $L^q$-solutions can be formulated by imposing smallness conditions on the local averages of the initial data $u_0$ and the forcing term $\bw$ over balls of suitable radius. The following result provides a global existence criterion based on local averages rather than norm smallness.
\begin{thm}
\label{thm:glob-exis-2} 
Let $\alpha, \ts \in (0,1)$, $-\alpha < \sigma < 0$, and $2\ts < N$. Assume 
$$
\frac{N\alpha - 2\ts \sigma}{N\alpha - 2\ts(\alpha + \sigma)}\leq p  <\frac{N\alpha-2\ts q\sigma}{N\alpha-2\ts q(\alpha+\sigma)}$$
and let $q$ satisfy 
$$
p_c < q \le p p_c, \quad q \ge p,
$$
where $p_c$ is given by \eqref{p-c}.
Let $u_0, \bw \in L^{p_c}(\mathbb{R}^N) \cap L^q(\mathbb{R}^N)$, with $u_0,\bw \geq 0$. Assume the scaling condition 
$$-(\sigma + \alpha)<0 \leq \beta := \frac{\alpha}{p-1} - \frac{N\alpha}{2\ts q}<\frac{1}{p}.$$
Let $R_0 = \min\left\{ 
\left( \frac{\mathscr{M}}{2\|u_0\|_{L^q}} \right)^{\frac{1}{\beta}}, 
\left( \frac{\mathscr{M}}{2C\|\bw\|_{L^q}} \right)^{\frac{1}{\beta+\alpha+\sigma}} 
\right\}$ with $\mathscr{M}$ from Lemma~\ref{lem:W-Condition}, and choose $M \geq 0$, $\delta > 0$ such that
\begin{align}
& \|u_0\|_{L^{p_c}(|x| > M R_0^{\alpha/2\ts})} \leq \frac{\mathscr{M}}{4\|F\|_{L^r}}, \label{H1} \\
& \|w\|_{L^{p_c}(|x| > M R_0^{\alpha/2\ts})} \leq \frac{\mathscr{M}}{4\|F\|_{L^r}}, \label{H2} \\
& \|F\|_{L^r(|\xi| < \delta)} \leq \frac{\mathscr{M}}{\max\left(\|u_0\|_{L^{p_c}}, \|\bw\|_{L^{p_c}}\right)}, \label{H3}
\end{align}
with $1 - \dfrac{1}{r} = \dfrac{1}{p_c} - \dfrac{1}{q}$, If the local averages satisfy
\begin{align}
& \sup_{R > M R_0} R^{\frac{\alpha}{p-1} - \frac{N\alpha}{2\ts}} \int\limits_{|y| < R^{\frac{\alpha}{2\ts}}} u_0(y) \, dy < \frac{\mathscr{M} M^{\frac{2\ts}{p-1} - N}}{8\|F\|_{L^q(|\xi| > \delta)}}, \label{A1} \\
& \sup_{R > M R_0} R^{\frac{\alpha}{p-1} - \frac{N\alpha}{2\ts} + \sigma + \alpha} \int\limits_{|y| < R^{\frac{\alpha}{2\ts}}} \bw(y) \, dy < \frac{\mathscr{M} M^{\frac{2\ts}{p-1} - N}}{8\|F\|_{L^q(|\xi| > \delta)}}, \label{A2}
\end{align}
then the maximal $L^q$-solution exists globally in time.
\end{thm}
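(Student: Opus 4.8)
The plan is to solve the Duhamel (mild) formulation
\[
u(t) = Z(t)u_0 + \int_0^t Y(t-\tau)\Big(|u(\tau)|^p + \tau^\sigma \bw\Big)\, d\tau
\]
by a contraction argument, but carried out in a function space adapted to \emph{local} rather than global smallness of the data, in the spirit of the local‑average technique of \cite{CVPDE-2024}. Let $\rho(t) = t^{\alpha/2\ts}$ be the self-similar spatial scale dictated by the scaling $u_\lambda(t,x) = \lambda^{2\ts/(p-1)}u(\lambda^{2\ts/\alpha}t,\lambda x)$ of the unforced equation, and let $\beta = \frac{\alpha}{p-1} - \frac{N\alpha}{2\ts q}$ be the time-decay rate it forces on the $L^q$ norm; the assumed range of $p$ (relative to the chosen $q$) is precisely what guarantees $0 \le \beta < 1/p$. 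I would work in the complete metric space
\[
X = \Big\{\, u \in L^\infty_{\mathrm{loc}}\big((0,\infty);\, L^q(\mathbb{R}^N)\big)\ :\ \|u\|_X := \sup_{t>0} t^{\beta}\, \sup_{z\in\mathbb{R}^N} \|u(t)\|_{L^q(B(z,\rho(t)))} \le \mathscr{M} \,\Big\},
\]
with $\mathscr{M}$ as in Lemma~\ref{lem:W-Condition}. The role of $R_0$ is that, by Hölder on a ball of radius $\rho(R)$, the $L^q$ norm of $u_0$ (resp.\ $\bw$) already controls the averaged quantity $R^{\frac{\alpha}{p-1}-\frac{N\alpha}{2\ts}}\int_{|y|<\rho(R)}u_0\,dy$ by $R^{\beta}\|u_0\|_{L^q}$, so the choice $R_0 = \min\{(\mathscr{M}/2\|u_0\|_{L^q})^{1/\beta},(\mathscr{M}/2C\|\bw\|_{L^q})^{1/(\beta+\alpha+\sigma)}\}$ is exactly what makes these small-scale averages match the hypothesized large-scale averages \eqref{A1}--\eqref{A2} at the crossover radius, while $M$ localizes where the $L^{p_c}$-tail corrections \eqref{H1}--\eqref{H3} are needed.

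\textbf{Linear estimates.} The first ingredient is a pair of bounds on $Z(t)$ and $Y(t)$. Writing these operators through subordination as superpositions of the fractional heat kernel (whose far field decays only polynomially, $\sim |x|^{-N-2\ts}$) against the Wright function, I would prove: (a) the smoothing estimates $\|Z(t)f\|_{L^q} \lesssim t^{-\frac{N\alpha}{2\ts}(1/p_c-1/q)}\|f\|_{L^{p_c}} = t^{-\beta}\|f\|_{L^{p_c}}$ and the analogue for $Y$ with an extra factor $(t-\tau)^{\alpha-1}$; and (b) a \emph{local} estimate converting the averaged quantity $\sup_R R^{\frac{\alpha}{p-1}-\frac{N\alpha}{2\ts}}\int_{|y|<\rho(R)}f(y)\,dy$ into a bound for $t^\beta\sup_z\|Z(t)f\|_{L^q(B(z,\rho(t)))}$, by exploiting the polynomial tail of the kernel. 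The conditions \eqref{H1}--\eqref{H3}, \eqref{A1}--\eqref{A2} are then exactly what is needed to bound the linear part of the Duhamel map: writing $F$ for the self-similar profile of $Z$ (so $Z(t)f \simeq t^{-N\alpha/2\ts}F(\cdot/\rho(t))*f$), decompose $u_0 = u_0\,\mathbf{1}_{\{|x|\le MR_0^{\alpha/2\ts}\}} + u_0\,\mathbf{1}_{\{|x|> MR_0^{\alpha/2\ts}\}}$ and $F = F\,\mathbf{1}_{\{|\xi|<\delta\}} + F\,\mathbf{1}_{\{|\xi|>\delta\}}$; the far part of $u_0$ is absorbed by Young's inequality and \eqref{H1}, the low part of the profile by Young and \eqref{H3}, and the high part of the profile acting on the near part of $u_0$ by the local estimate (b) and \eqref{A1}. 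The same three-piece split applied to $\int_0^t Y(t-\tau)\tau^\sigma\bw\,d\tau$ uses \eqref{H2}, \eqref{H3}, \eqref{A2}; here $\sigma+\alpha>0$ makes $\int_0^t(t-\tau)^{\alpha-1-\cdots}\tau^\sigma\,d\tau$ converge at $\tau=0$, and the shifted weight $\frac{\alpha}{p-1}-\frac{N\alpha}{2\ts}+\sigma+\alpha$ in \eqref{A2} reflects the extra power $t^{\sigma+\alpha}$ produced by integrating the source in time. Summing, the linear part has $X$-norm at most $\mathscr{M}/2$.

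\textbf{Nonlinear estimate and conclusion.} For the nonlinearity, using $\big||a|^p-|b|^p\big|\lesssim(|a|^{p-1}+|b|^{p-1})|a-b|$, Hölder's inequality (valid since $q\ge p$), and the $Y$-smoothing estimate (a) applied from uniform-local $L^{q/p}$ to uniform-local $L^q$, I would show that for $u,v\in X$,
\[
\Big\|\int_0^t Y(t-\tau)\big(|u(\tau)|^p-|v(\tau)|^p\big)\,d\tau\Big\|_X \lesssim \big(\|u\|_X+\|v\|_X\big)^{p-1}\|u-v\|_X,
\]
where the time integral $\int_0^t(t-\tau)^{\alpha-1-\theta}\tau^{-p\beta}\,d\tau$ with $\theta=\frac{N\alpha}{2\ts}\cdot\frac{p-1}{q}$ converges at $\tau=t$ because $q>p_c$ (so $\theta<\alpha$), converges at $\tau=0$ because $\beta<1/p$, and reproduces the weight $t^{-\beta}$ by scale invariance — the range $p_c<q\le p\,p_c$ together with the admissible $p$ making all the exponent bookkeeping close. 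Choosing $\mathscr{M}$ small (the content of Lemma~\ref{lem:W-Condition}), the map $\Phi(u)=Z(t)u_0+\int_0^t Y(t-\tau)(|u|^p+\tau^\sigma\bw)\,d\tau$ sends $\{\|u\|_X\le\mathscr{M}\}$ into itself and contracts it, so Banach's fixed point theorem yields a global $u$. Positivity of the kernels $Z,Y$ and $u_0,\bw\ge0$ give $u\ge0$; since the fixed point also lies in $C([0,T];L^q)$ for each $T$, uniqueness from Theorem~\ref{thm:Loc-Exis} identifies it with the maximal $L^q$-solution, which is therefore global.

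\textbf{Main difficulty.} The delicate point is the local estimate (b): transferring purely \emph{averaged} information on $(u_0,\bw)$ through the doubly fractional solution operators into a space-time bound carrying the sharp weight $t^\beta$. Because $Z(t)$ is a Wright-function superposition of fractional heat kernels rather than a single Gaussian, its pointwise kernel decays only polynomially and one must integrate the subordination variable carefully; keeping the frequency splitting $|\xi|<\delta$ / $|\xi|>\delta$ and the spatial splitting at radius $MR_0^{\alpha/2\ts}$ mutually consistent across this superposition, and verifying that the four thresholds in \eqref{H1}--\eqref{A2} indeed sum to $\mathscr{M}/2$, is the heart of the argument.
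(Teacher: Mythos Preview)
Your overall architecture differs from the paper's in an important structural way, and this difference creates a genuine gap. The paper does \emph{not} run a fresh fixed-point argument for Theorem~\ref{thm:glob-exis-2}; instead, Lemma~\ref{lem:W-Condition} already contains the entire nonlinear machinery (bootstrap a~priori bound plus blow-up alternative, in the \emph{global} $L^q$ norm). The proof of the theorem is then purely linear: one only has to verify the hypothesis
\[
t^{\beta}\bigl(\|\mathcal{L}_{u_0}(t,\cdot)\|_{L^q(\mathbb{R}^N)} + \|\mathcal{S}_{\bw}(t,\cdot)\|_{L^q(\mathbb{R}^N)}\bigr)\le \mathscr{M}\qquad\text{for all }t>0,
\]
and this is done by splitting into $t\le R_0$ (where the $L^q$ norms of $u_0,\bw$ and the definition of $R_0$ suffice directly) and $t>R_0$ (where the convolution variable $y$ is split into the three regions $\mathcal{E}_1=\{|y|>Mt^{\alpha/2\ts}\}$, $\mathcal{E}_2=\{|y|<Mt^{\alpha/2\ts},\,|x-y|<\delta t^{\alpha/2\ts}\}$, $\mathcal{E}_3=\{|y|<Mt^{\alpha/2\ts},\,|x-y|>\delta t^{\alpha/2\ts}\}$, which is the analogue of your three-piece split of $u_0$ and of the profile $F$, but taken at the $t$-dependent scale). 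Conditions \eqref{H1}--\eqref{H3}, \eqref{A1}--\eqref{A2} are exactly what bounds the three pieces of $\mathcal{L}_{u_0}$ and of $\mathcal{S}_{\bw}$ in global $L^q$, and Lemma~\ref{lem:W-Condition} closes the argument.

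By contrast, you set up a contraction in a \emph{uniform-local} space $\sup_{z}\|u(t)\|_{L^q(B(z,\rho(t)))}$ and treat Lemma~\ref{lem:W-Condition} merely as ``the choice of $\mathscr{M}$''. This has two concrete problems. First, the nonlinear step --- mapping $|u|^p$ through $Y$ in uniform-local $L^q$ --- is not the standard Young inequality and requires summing the polynomial tail of the kernel over lattice-translated balls; you correctly identify this as the main difficulty, but it is a difficulty you manufactured, since the paper never leaves global $L^q$. Second, and more seriously, even if the contraction closed, the fixed point would only be known to lie in uniform-local $L^q$, so your final sentence (``uniqueness from Theorem~\ref{thm:Loc-Exis} identifies it with the maximal $L^q$-solution'') is unjustified: uniform-local $L^q$ does not embed into $L^q(\mathbb{R}^N)$, and Theorem~\ref{thm:Loc-Exis} gives uniqueness only in the latter. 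The clean fix is to drop the uniform-local norm entirely, recognize that Lemma~\ref{lem:W-Condition} already handles the nonlinearity, and reduce the task to the linear verification above --- which is essentially your ``Linear estimates'' paragraph, carried out in global $L^q$ with the $t$-dependent three-region decomposition.
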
 

 The two global existence results, Theorem~\ref{thm:glob-exis-1} and Theorem~\ref{thm:glob-exis-2}, are established under distinct sets of assumptions. In Theorem~\ref{thm:glob-exis-1}, global existence is ensured by imposing smallness conditions on the norms of the initial data $u_0$ and the forcing term $\bw$. Specifically, these smallness conditions are formulated in terms of norms associated with the critical exponent $p_c$ and the exponent $r_c$ introduced in the proof.  

By contrast, Theorem~\ref{thm:glob-exis-2} does not require smallness in any norm. Instead, it relies on assumptions concerning the behavior and decay of local averages of $u_0$ and $\bw$ over balls. This approach emphasizes the spatial distribution and asymptotic properties of the data rather than their global magnitude.  

Therefore, while Theorem~\ref{thm:glob-exis-1} is grounded in quantitative smallness within specific functional spaces, Theorem~\ref{thm:glob-exis-2} provides an alternative existence criterion based on qualitative spatial decay and distribution properties.\\

\noindent{\bf Outline of the paper.}  
The paper is organized as follows.  

In Section~\ref{sec:prelim}, we collect the preliminary material needed throughout the paper. 
We begin by reviewing essential concepts related to fractional derivatives and then introduce the fundamental kernels used to construct mild solutions via the Duhamel formula. 
The key properties of these kernels—such as self-similarity, integrability, and regularity—form the basis of our analysis of the existence and qualitative behavior of solutions. 
This section also establishes several technical lemmas that serve as crucial tools for the estimates developed later.

Section~\ref{sec:proofs} is devoted to the proofs of the main results. 
We first establish a local existence result, stated in Theorem~\ref{thm:Loc-Exis}. 
We then address the nonexistence of global solutions by proving Theorem~\ref{thm:blowup}. 
Next, we prove the global existence result in Theorem~\ref{thm:glob-exis-1}, which relies on the smallness of the initial data $u_0$ in the critical norm and the smallness of the forcing term $\bw$ in a suitable $L^{r_c}$ norm. 
Finally, we present Theorem~\ref{thm:glob-exis-2}, establishing global existence under significantly weaker local smallness conditions. 
Unlike Theorem~\ref{thm:glob-exis-1}, this result allows $u_0$ and $\bw$ to be large in a global norm, provided they satisfy appropriate local smallness and decay assumptions.

In Section~\ref{sec:num}, we complement the analytical results with numerical simulations of a one-dimensional fractional diffusion equation.
The simulations are designed to illustrate the critical threshold predicted by Theorem~\ref{thm:blowup} and to visualize the qualitative differences between subcritical and supercritical regimes. 
In particular, we compare the time evolution and spatial profiles of solutions for exponents below and above the critical value, highlighting the concentration mechanism leading to blow-up in the subcritical case and the bounded behavior observed in the supercritical regime.
These computations do not constitute a proof, but they provide a concrete and intuitive illustration of the theoretical phenomena established in the paper.

Throughout the remainder of the article, the constant $C$ may denote different values in different contexts.

\section{Useful tools \& Auxiliary results}
\label{sec:prelim}
In this section, we introduce the notation used throughout the paper and present several auxiliary results and estimates.

We begin with a brief introduction to the Caputo fractional derivative. For a comprehensive treatment of fractional calculus, we refer the reader to \cite{Diethelm, KST, Podlubny1999, SKM}.
The \emph{Riemann--Liouville fractional integral} of order $\alpha \in (0,1)$ is defined by
\begin{equation}  
\label{RL-Int}  
\mathcal{I}^{\alpha} u(t) = \frac{1}{\Gamma(\alpha)} \int\limits_0^t (t-s)^{\alpha-1} u(s) \, ds,  
\end{equation}  
where $\Gamma$ denotes the Gamma function, given by
\begin{equation*}
\Gamma(z) = \int\limits_0^\infty s^{z-1} e^{-s} \, ds, \quad \text{for } \Re(z) > 0.
\end{equation*}

The \emph{Riemann--Liouville fractional derivative} of order $\alpha \in (0,1)$ is then defined as
\begin{equation}  
\label{RL-FD}  
\cd^{\alpha} u(t) = \frac{d}{dt} \left( \mathcal{I}^{1-\alpha} u(t) \right) = \frac{1}{\Gamma(1-\alpha)} \frac{d}{dt} \left( \int\limits_0^t (t-s)^{-\alpha} u(s) \, ds \right),
\end{equation}  
provided that $t \mapsto \mathcal{I}^{1-\alpha} u(t)$ is absolutely continuous.

The \emph{Caputo fractional derivative} of order $\alpha \in (0,1)$ is defined by
\begin{equation}  
\label{Cap-FD1}  
\partial_t^{\alpha} u(t) = \cd^{\alpha} \left( u(t) - u(0) \right),
\end{equation}  
where $u(0)$ exists and $t \mapsto \mathcal{I}^{1-\alpha} u(t)$ is absolutely continuous.

Under the stronger assumption that $u$ is absolutely continuous on $[0,T]$, the Caputo derivative admits the following equivalent representation (see \cite[Theorem 2.1, p.~92]{KST}):
\begin{equation}  
\label{Cap-FD2}  
\partial_t^{\alpha} u(t) = \frac{1}{\Gamma(1-\alpha)} \int\limits_0^t (t-s)^{-\alpha} \frac{du}{ds}(s) \, ds.
\end{equation}
We now present the following lemma concerning the fractional integration-by-parts formula for the Caputo derivative.
\begin{lem}
\label{IPP}
Let $\alpha \in (0,1)$, $u \in AC([0,T])$, and $v \in C^1([0, T])$ with $v(T) = 0$. Then,
\begin{equation}
\int\limits_0^T v(t) \, \partial_t^\alpha u(t)\, dt = \int\limits_0^T \big( u(t) - u(0) \big) \, \cd_{t|T}^{\alpha} v(t)\, dt,
\end{equation}
where $\cd_{t|T}^{\alpha}$ is the right-sided Riemann-Liouville fractional derivative defined by
\begin{equation}
\label{RL-right}
\cd_{t|T}^{\alpha}v(t) = -\frac{1}{\Gamma(1-\alpha)} \frac{d}{dt} \int\limits_{t}^{T} (s-t)^{-\alpha} v(s)\, ds.
\end{equation}
\end{lem}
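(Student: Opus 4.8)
The plan is to reduce the identity \eqref{IPP} to the ordinary integration‑by‑parts formula by stripping one Riemann--Liouville integral off the Caputo derivative and then trading the left‑sided kernel for the right‑sided one via Fubini's theorem. First I would record the fundamental identity of fractional calculus (see \cite{KST})
\[
u(t)-u(0)=\mathcal{I}^{\alpha}\big(\partial_t^\alpha u\big)(t),\qquad t\in(0,T),
\]
which follows from \eqref{Cap-FD1}--\eqref{RL-FD}, the semigroup law $\mathcal{I}^{\alpha}\mathcal{I}^{1-\alpha}=\mathcal{I}^{1}$, and the hypothesis $\partial_t^\alpha u\in L^1(0,T)$. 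In particular $g:=\mathcal{I}^{1-\alpha}(u-u(0))=\mathcal{I}^{1}(\partial_t^\alpha u)$ is absolutely continuous on $[0,T]$, satisfies $g(0)=0$, and $g'=\partial_t^\alpha u$ a.e.; moreover $u-u(0)\in L^1(0,T)$.

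Next, assuming momentarily that $v\in C^{1}([0,T])$ with $v(T)=0$, the classical integration‑by‑parts formula together with $v(T)=0$ and $g(0)=0$ gives
\[
\int_0^T v(t)\,\partial_t^\alpha u(t)\,dt=-\int_0^T v'(t)\,g(t)\,dt .
\]
Writing $g(t)=\tfrac{1}{\Gamma(1-\alpha)}\int_0^t(t-s)^{-\alpha}\big(u(s)-u(0)\big)\,ds$ and applying Fubini's theorem — legitimate since $u-u(0)\in L^1(0,T)$ and $v'$ is bounded — this becomes
\[
\int_0^T v(t)\,\partial_t^\alpha u(t)\,dt=-\frac{1}{\Gamma(1-\alpha)}\int_0^T\big(u(s)-u(0)\big)\left(\int_s^T(t-s)^{-\alpha}v'(t)\,dt\right)ds .
\]
It then remains to identify the inner integral with the right‑sided operator: a direct computation from \eqref{RL-right} (differentiating and integrating by parts in the inner integral, the endpoint contribution at $t=T$ vanishing because $v(T)=0$) shows that
\[
\cd_{t|T}^{\alpha}v(s)=-\frac{1}{\Gamma(1-\alpha)}\int_s^T(t-s)^{-\alpha}v'(t)\,dt ,
\]
and substituting this back yields exactly \eqref{IPP} for such $v$.

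Finally, I would remove the auxiliary $C^1$ regularity by density, since in the applications \eqref{IPP} is used only against smooth compactly supported test functions. The hard part is precisely this low‑regularity point: $\cd_{t|T}^{\alpha}v$ need not exist pointwise for a merely continuous $v$, so the identity genuinely lives on a subspace of sufficiently regular functions vanishing at $T$, and one must be mildly careful with the boundary contributions at $t=0$ and $t=T$ and with the Fubini interchange. All of these, however, become routine once the representation $u-u(0)=\mathcal{I}^{\alpha}\partial_t^\alpha u$ is in hand.
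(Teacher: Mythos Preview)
Your argument is correct, and it shares the same two ingredients as the paper's proof---Fubini's theorem and one classical integration by parts---but you arrange them in the opposite order and transfer the derivative onto a different object. The paper starts from the representation \eqref{Cap-FD2} (hence implicitly assumes $u$ absolutely continuous), first applies Fubini to obtain $\int_0^T u'(s)\, I^{1-\alpha}_{s|T}v(s)\,ds$, and then integrates by parts in $s$, differentiating $u$ and producing $\cd_{s|T}^{\alpha}v$; this leaves a nonzero boundary contribution $u(0)\, I^{1-\alpha}_{0|T}v(0)$, which the paper absorbs via the separate computation $\int_0^T \cd_{t|T}^{\alpha}v\,dt=-I^{1-\alpha}_{0|T}v(0)$. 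You instead integrate by parts first, against the antiderivative $g=\mathcal{I}^{1}(\partial_t^\alpha u)$, and only then apply Fubini; because $g(0)=0$ there is no boundary term to chase, and you use only the stated hypothesis $\partial_t^\alpha u\in L^1$ rather than the stronger $u\in AC$. The price you pay is the temporary $C^1$ assumption on $v$, but as you rightly observe, some extra regularity on $v$ is needed in any case for $\cd_{t|T}^{\alpha}v$ to make sense---a point the paper's proof uses just as silently when it writes $\tfrac{d}{ds}I^{1-\alpha}_{s|T}v=-\cd_{s|T}^{\alpha}v$.
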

\begin{proof}
The result follows directly from the standard properties of the Caputo derivative and Riemann-Liouville fractional derivatives; see, for instance,  \cite[Section~2.4]{Podlubny1999}.

Recall that for $0<\alpha<1$, the Caputo derivative of order $\alpha$ is defined as
Thus,
$$
\int_0^T v(t) \, \partial_t^\alpha u(t)\, dt 
= \frac{1}{\Gamma(1-\alpha)} \int_0^T v(t) \left( \int_0^t (t-s)^{-\alpha} u'(s)\, ds \right) dt.
$$
By Fubini's theorem, we interchange the order of integration:
$$
= \frac{1}{\Gamma(1-\alpha)} \int\limits_0^T u'(s) \left( \int\limits_s^T (t-s)^{-\alpha} v(t)\, dt \right) ds 
= \int\limits_0^T u'(s) \, I^{1-\alpha}_{s|T} v(s)\, ds,
$$
where $ I^{1-\alpha}_{s|T} v(s) = \frac{1}{\Gamma(1-\alpha)} \int\limits_s^T (t-s)^{-\alpha} v(t)\, dt $.

We integrate by parts, we write
$$
\int_0^T u'(s) \, I_{s|T}^{1-\alpha} v(s)\, ds 
= \Big[ u(s) \, I_{s|T}^{1-\alpha} v(s) \Big]_{s=0}^{s=T} 
- \int_0^T u(s) \, \frac{d}{ds} I_{s|T}^{1-\alpha} v(s)\, ds.
$$
Now compute the derivative of the fractional integral. By the definition of the right-sided Riemann-Liouville derivative (\ref{RL-right}), we have
$$
\frac{d}{ds} I_{s|T}^{1-\alpha} v(s) 
= \frac{1}{\Gamma(1-\alpha)} \frac{d}{ds} \int_s^T (t-s)^{-\alpha} v(t)\, dt 
= - \cd_{s|T}^{\alpha} v(s).
$$
Evaluate the boundary term. Since $I_{T|T}^{1-\alpha} v(T) = 0$ (the integral is over an interval of zero length), we obtain
$$
\Big[ u(s) \, I_{s|T}^{1-\alpha} v(s) \Big]_{s=0}^{s=T} 
= u(T) \cdot 0 - u(0) \, I_{0|T}^{1-\alpha} v(0) 
= - u(0) \, I_{0|T}^{1-\alpha} v(0).
$$
Therefore,
$$
\int_0^T v(t) \, \partial_t^\alpha u(t)\, dt 
= - \int_0^T u(s) \, \frac{d}{ds} I_{s|T}^{1-\alpha} v(s)\, ds - u(0) I_{0|T}^{1-\alpha} v(0)
= \int_0^T u(s) \, \cd_{s|T}^{\alpha} v(s)\, ds - u(0) I_{0|T}^{1-\alpha} v(0).
$$
It remains to express $I_{0|T}^{1-\alpha} v(0)$ in terms of $\cd_{t|T}^{\alpha} v(t)$. Compute the integral of this derivative:
$$
\int_0^T \cd_{t|T}^{\alpha} v(t)\, dt 
= -\frac{1}{\Gamma(1-\alpha)} \int_0^T \frac{d}{dt} \left( \int_t^T (s-t)^{-\alpha} v(s)\, ds \right) dt.
$$

By the Fundamental Theorem of Calculus,
$$
= -\frac{1}{\Gamma(1-\alpha)} \left[ \int_t^T (s-t)^{-\alpha} v(s)\, ds \right]_{t=0}^{t=T}.
$$

For $t = T$, the integral $\int_T^T (s-T)^{-\alpha} v(s)\, ds = 0$. For $t = 0$, we obtain $\int_0^T s^{-\alpha} v(s)\, ds$. Thus,
$$
\int_0^T \cd_{t|T}^{\alpha} v(t)\, dt 
= \frac{1}{\Gamma(1-\alpha)} \int_0^T s^{-\alpha} v(s)\, ds 
= I_{0|T}^{1-\alpha} v(0).
$$
We deduce that
$$
- u(0) I_{0|T}^{1-\alpha} v(0) = - u(0) \int_0^T \cd_{t|T}^{\alpha} v(t)\, dt.
$$
Substituting back into the previous expression yields
$$
\int_0^T v(t) \, \partial_t^\alpha u(t)\, dt 
= \int_0^T u(t) \, \cd_{t|T}^{\alpha} v(t)\, dt - u(0) \int_0^T \cd_{t|T}^{\alpha} v(t)\, dt 
= \int_0^T \big( u(t) - u(0) \big) \, \cd_{t|T}^{\alpha} v(t)\, dt,
$$
which establishes the desired integration by parts formula.
\end{proof}
\subsection{Fundamental solution and Duhamel's formula}
A sufficiently smooth solution \( u \) to \eqref{main} can be represented via Duhamel's formula:
\begin{equation}
\begin{aligned}
\label{F-D}
u(t,x)
&= \underbrace{\int_{\mathbb{R}^N} u_0(y)\, Z(t, x-y)\, dy}_{\mathcal{L}_{u_0}(t,x)}
   + \underbrace{\int_{0}^{t} \int_{\mathbb{R}^N} |u(\tau,y)|^{p}\, Y(t-\tau, x-y)\, dy\, d\tau}_{\mathcal{N}_{u}(t,x)} \\
&\quad + \underbrace{\int_{0}^{t} \int_{\mathbb{R}^N} \tau^{\sigma}\, \bw(y)\, Y(t-\tau, x-y)\, dy\, d\tau}_{\mathcal{S}_{\bw}(t,x)}
   \; := \; \mathcal{G}u(t,x).
\end{aligned}
\end{equation}
Here \( Z \) denotes the fundamental solution of the homogeneous equation
$$
\partial_t^{\alpha} u + (-\Delta)^{\ts} u = 0.
$$
For $ \alpha \in (0,1)$, the associated kernel $ Y$ which corresponds to the time fractional Duhamel term satisfies $Y(t,\cdot) \sim \partial_t^\alpha Z(t,\cdot)$ in the sense of distributions \cite{CVPDE-2024, Kemp}.

More precisely, both \( Z \) and \( Y \) admit self-similar representations of the form
\[
Z(t,x) = t^{-\frac{\alpha N}{2\ts}}\, F\!\left(x\, t^{-\frac{\alpha}{2\ts}}\right),
\qquad
Y(t,x) = t^{-\frac{\alpha N}{2\ts}}\, G\!\left(x\, t^{-\frac{\alpha}{2\ts}}\right),
\]
where \( F \) and \( G \) are the associated spatial profiles.

We next recall several fundamental properties of the kernels \( Z \) and \( Y \).  
The following key lemma gathers the main estimates and structural features that will be crucial in our analysis.  
For proofs and additional details, we refer to \cite{Eid, Kemp, Kim, Koch, CVPDE-2024}.

\begin{lem}\label{lem:kernels}
Let $Z$ and $Y$ be the spatial kernels associated with the linear part of equation \eqref{main}, as defined in the integral formulation \eqref{F-D}. The following properties hold
~ \begin{enumerate}[label=(\roman*)]
    \item  Both kernels exhibit the scaling forms  
    \begin{equation*}
    Z(t,x) = t^{-\frac{N\alpha}{2\ts}} F\left(xt^{-\frac{\alpha}{2\ts}}\right), \quad Y(t,x) = t^{-\left(1-\alpha+\frac{N\alpha}{2\ts}\right)} G\left(xt^{-\frac{\alpha}{2\ts}}\right),
    \end{equation*}
    where $F, G$ are positive, radially decreasing functions, smooth away from the origin.    
  \item    $\|Z(t,\cdot)\|_{L^1(\mathbb{R}^N)} = 1$ for all $t > 0$.
        \item Let $q_c := \frac{N}{N-2\ts}$ if $N > 2\ts$, and $\infty$ if $N \leq 2\ts$. For any $q \in [1, q_c)$, $F \in L^q(\mathbb{R}^N)$ and 
        \begin{align}
        \label{Es-Z}
        \|Z(t,\cdot)\|_{L^q(\mathbb{R}^N)} = t^{-\frac{N\alpha}{2\ts}(1 - \frac{1}{q})}\|F\|_{L^q(\mathbb{R}^N)}, \quad t > 0.
        \end{align}
        \item For $t \geq t_0 > 0$, there exists $C = C(\alpha, \ts, N, q) > 0$ such that 
        \begin{equation*}
        \|Z(t,\cdot) - Z(t_0,\cdot)\|_{L^q(\mathbb{R}^N)} \leq C (t - t_0) t_0^{-1 - \frac{N\alpha}{2\ts}(1 - \frac{1}{q})},
        \end{equation*}
       \item  For any $\lambda > 0$ and $q \in [1, \infty]$,  
    \begin{equation*}
    \|Z(t,\cdot)\|_{L^q(|x| > \lambda t^{\frac{\alpha}{2\ts}})} = t^{-\frac{N\alpha}{2\ts}(1 - \frac{1}{q})}\|F\|_{L^q(|x| > \lambda)}.
    \end{equation*}
\item For every $q \in [1, q_c)$, the function $G$ belongs to $L^q(\mathbb{R}^N)$, and the following identity holds:
        \begin{align}
        \label{Es-Y}
        \|Y(t,\cdot)\|_{L^q(\mathbb{R}^N)} = t^{-( 1 - \alpha + \frac{N\alpha}{2\ts}\left(1 - \frac{1}{q}\right))}\|G\|_{L^q},  \quad t > 0.
        \end{align}
        \item There exists $C = C(\alpha, s, N, q) > 0$ such that for all $0 \leq \tau < t_0 \leq t$,   
        \begin{align}
        \label{Cont_Y}
        \|Y(t-\tau, \cdot) - Y(t_0-\tau, \cdot)\|_{L^q(\mathbb{R}^N)} \leq C (t - t_0)(t_0 - \tau)^{\alpha-\frac{N\alpha}{2\ts}(1-\frac{1}{q})-2}.
        \end{align}
    \end{enumerate}
 \end{lem}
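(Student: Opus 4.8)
The plan is to derive every assertion from the subordination representations of the two kernels together with the classical pointwise bounds for the rotationally symmetric fractional heat kernel. Write $K_\ts(r,\cdot)=\mathcal F^{-1}\big(e^{-r|\xi|^{2\ts}}\big)$ for the kernel of $e^{-r(-\Delta)^\ts}$; it is positive, radially decreasing, smooth, mass-preserving ($\int_{\R^N}K_\ts(r,\cdot)=1$), self-similar ($K_\ts(r,x)=r^{-N/2\ts}g_\ts(xr^{-1/2\ts})$), with profile bounds $g_\ts(z)\lesssim(1+|z|)^{-(N+2\ts)}$, $|\nabla g_\ts(z)|\lesssim(1+|z|)^{-(N+2\ts+1)}$ and $g_\ts$ bounded near $0$. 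Let $M_\alpha$ be the $M$-Wright probability density on $(0,\infty)$, so $\int_0^\infty M_\alpha=1$, $M_\alpha(0^+)=1/\Gamma(1-\alpha)$, $M_\alpha$ decays superexponentially at $+\infty$, and $\int_0^\infty M_\alpha(\theta)e^{-s\theta}\,d\theta=E_\alpha(-s)$. Then (see \cite{Bazhlekova2000, Mainardi1996, CVPDE-2024})
\[
Z(t,x)=\int_0^\infty M_\alpha(\theta)\,K_\ts(t^\alpha\theta,x)\,d\theta,
\qquad
Y(t,x)=\alpha\,t^{\alpha-1}\int_0^\infty \theta\,M_\alpha(\theta)\,K_\ts(t^\alpha\theta,x)\,d\theta.
\]
Inserting the self-similar form of $K_\ts$ and rescaling inside the $\theta$-integral reads off (i): the stated scaling exponents, with $F(y)=\int_0^\infty\theta^{-N/2\ts}M_\alpha(\theta)g_\ts(y\theta^{-1/2\ts})\,d\theta$ and $G(y)=\alpha\int_0^\infty\theta^{1-N/2\ts}M_\alpha(\theta)g_\ts(y\theta^{-1/2\ts})\,d\theta$. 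Positivity and radial monotonicity are inherited from $g_\ts$ and the nonnegativity of $M_\alpha$; smoothness of $F,G$ on $\R^N\setminus\{0\}$ follows by differentiating under the integral sign on each $\{|y|\ge\varepsilon\}$, justified by dominated convergence using the decay of $g_\ts$ and its derivatives at infinity together with the rapid decay of $M_\alpha$.

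For the integrability identities (iii), (v), (vi) the algebraic content is a single change of variables $y=xt^{-\alpha/2\ts}$ in the self-similar forms, which gives $\|Z(t,\cdot)\|_{L^q(\R^N)}=t^{-\frac{N\alpha}{2\ts}(1-1/q)}\|F\|_{L^q}$; the identical computation restricted to $\{|y|>\lambda\}$ gives (v) for every $q\in[1,\infty]$ (no integrability near $0$ is needed there), and the same for $Y$ yields the exponent $-(1-\alpha+\frac{N\alpha}{2\ts}(1-1/q))$. What must be verified is that $F,G\in L^q(\R^N)$ exactly for $q\in[1,q_c)$. Splitting the $\theta$-integral for $F(y)$ at $\theta=|y|^{2\ts}$ and using that $g_\ts$ is bounded on $\{\theta>|y|^{2\ts}\}$ and $g_\ts(z)\lesssim|z|^{-(N+2\ts)}$ on $\{\theta<|y|^{2\ts}\}$ gives $F(y)\lesssim|y|^{-(N-2\ts)}$ near $0$ when $N>2\ts$ (and $F$ bounded near $0$ when $N\le 2\ts$), while $F(y)\lesssim|y|^{-(N+2\ts)}$ for $|y|\ge1$; since $|y|^{-(N-2\ts)}\in L^q_{\mathrm{loc}}$ iff $q(N-2\ts)<N$, this is precisely $F\in L^q\iff q<q_c$. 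The same splitting shows the origin singularity of $G$ is no worse (in fact milder), so $G\in L^q$ for $q\in[1,q_c)$ as well. Finally (ii) is immediate: $Z(t,\cdot)\ge0$ and $\int_{\R^N}Z(t,\cdot)=\widehat Z(t,0)=E_\alpha(0)=1$ (equivalently $\int_0^\infty M_\alpha\cdot\int_{\R^N}K_\ts=1$).

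The continuity-in-time estimates (iv) and (vii) follow from the fundamental theorem of calculus in the time variable combined with the rescaling. Differentiating the self-similar form of $Z$ gives $\partial_s Z(s,x)=s^{-\frac{N\alpha}{2\ts}-1}H(xs^{-\alpha/2\ts})$ with $H(y)=-\frac{\alpha}{2\ts}\big(NF(y)+y\cdot\nabla F(y)\big)$, hence $\|\partial_s Z(s,\cdot)\|_{L^q}=s^{-1-\frac{N\alpha}{2\ts}(1-1/q)}\|H\|_{L^q}$; writing $Z(t,\cdot)-Z(t_0,\cdot)=\int_{t_0}^{t}\partial_s Z(s,\cdot)\,ds$, taking $\|\cdot\|_{L^q}$ inside, and using that the exponent $-1-\frac{N\alpha}{2\ts}(1-1/q)$ is negative (so the integrand is decreasing on $[t_0,t]$) gives (iv). Identically, $\partial_b Y(b,x)=b^{-(1-\alpha+\frac{N\alpha}{2\ts})-1}\widetilde H(xb^{-\alpha/2\ts})$ with $\widetilde H(y)=-(1-\alpha+\tfrac{N\alpha}{2\ts})G(y)-\tfrac{\alpha}{2\ts}y\cdot\nabla G(y)$, so $\|\partial_b Y(b,\cdot)\|_{L^q}=b^{\alpha-2-\frac{N\alpha}{2\ts}(1-1/q)}\|\widetilde H\|_{L^q}$; applying this with $b=t-\tau$ on $[t_0-\tau,t-\tau]$ and noting $\alpha-2-\frac{N\alpha}{2\ts}(1-1/q)<-1$ yields (vii). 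Both steps require $H,\widetilde H\in L^q$ for $q\in[1,q_c)$, which comes from the gradient bounds $|\nabla F(y)|,|\nabla G(y)|\lesssim|y|^{-(N-2\ts)-1}$ near $0$ and $\lesssim|y|^{-(N+2\ts)-1}$ at infinity — obtained by differentiating the subordination integrals and rerunning the $\theta$-splitting with $|\nabla g_\ts(z)|\lesssim(1+|z|)^{-(N+2\ts+1)}$ — so that $|y\cdot\nabla F(y)|$ inherits exactly the integrability of $F(y)$.

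\textbf{Main obstacle.} The only genuinely analytic ingredient is establishing the pointwise size and gradient bounds for the profiles $F$ and $G$: pinning down the $|y|^{-(N-2\ts)}$-type singularity at the origin, the $|y|^{-(N+2\ts)}$ decay at infinity, and the one-derivative-sharper versions; once these are in hand, every assertion reduces to a single rescaling or one application of the fundamental theorem of calculus. These profile bounds are classical for symmetric stable densities and, for $Z$ and $Y$ themselves, are recorded in \cite{Eid, Kemp, Kim, Koch, CVPDE-2024}, so I would either cite them directly or reproduce them by the $\theta$-splitting above; the delicate point there is the interplay between the polynomial factor $\theta^{-N/2\ts}$ (resp.\ $\theta^{1-N/2\ts}$) and the boundary behaviour $M_\alpha(0^+)=1/\Gamma(1-\alpha)$ of the Wright density, which is precisely what makes the origin singularity of $Z$ appear exactly at $N=2\ts$ and, a fortiori, places $G$ in $L^q$ for the same range $[1,q_c)$.
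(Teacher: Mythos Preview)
The paper does not give its own proof of this lemma: it states the result and refers to \cite{Eid, Kemp, Kim, Koch, CVPDE-2024} for proofs and further details. Your proposal is correct and supplies exactly the kind of argument those references contain---the subordination representation via the $M$-Wright density combined with the known size and gradient bounds for the stable profile $g_\ts$, followed by a change of variables for the scaling identities and the fundamental theorem of calculus for the Lipschitz-in-time bounds. In particular, your $\theta$-splitting to extract the $|y|^{-(N-2\ts)}$ origin singularity and $|y|^{-(N+2\ts)}$ tail of $F$, and the observation that the extra factor $\theta$ in $G$ makes its origin behaviour strictly milder (so $G\in L^q$ for $q<q_c$ a fortiori), are the right mechanisms; the derivation of (iv) and (vii) by bounding $\int_{t_0}^t s^{-1-\gamma}\,ds\le (t-t_0)t_0^{-1-\gamma}$ via monotonicity is exactly how these estimates are obtained in \cite{CVPDE-2024}. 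There is nothing to correct; you have simply unpacked what the paper outsources.
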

 \begin{rem}
\rm A direct computation shows that
\[
1 - \alpha + \frac{N\alpha}{2\ts}\!\left(1 - \frac{1}{q}\right) < 1
\quad \Longleftrightarrow \quad q < q_c,
\]
which implies that
\[
Y(t,\cdot) \in L^{1}_{\mathrm{loc}}(0,\infty; L^{q})
\quad \text{if and only if} \quad 1 \le q < q_c.
\]
 \end{rem}
 A straightforward consequence of the kernel estimates \eqref{Es-Y} and \eqref{Cont_Y} is the following estimate.
\begin{lem}\label{ES_H} 
Let $\alpha \in (0,1)$ and $\gamma \in (0,1)$. There exists a constant $C > 0$ such that for all $0 < t_0 \leq t $, the following H\"older-type estimate holds:
\begin{align*}
\|Y(t,\cdot) - Y(t_0,\cdot)\|_{L^1(\mathbb{R}^N)} \leq C (t - t_0)^\gamma  t_0^{\alpha - 1 - \gamma}.
\end{align*}
\end{lem}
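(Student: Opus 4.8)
The estimate is a direct interpolation between two bounds on $\|Y(t,\cdot)-Y(t_0,\cdot)\|_{L^1(\R^N)}$ that are already available from Lemma~\ref{lem:kernels}: a ``crude'' bound coming from the triangle inequality and the $L^1$ size of $Y$, and a ``Lipschitz'' bound coming from \eqref{Cont_Y}. First I would record the crude bound: applying \eqref{Es-Y} with $q=1$ gives $\|Y(s,\cdot)\|_{L^1}=s^{\alpha-1}\|G\|_{L^1}$ for every $s>0$, so by the triangle inequality
\[
\|Y(t,\cdot)-Y(t_0,\cdot)\|_{L^1(\R^N)}\le\bigl(t^{\alpha-1}+t_0^{\alpha-1}\bigr)\|G\|_{L^1}\le 2\,t_0^{\alpha-1}\|G\|_{L^1},
\]
where the last inequality uses $t\ge t_0>0$ together with $\alpha-1<0$. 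Next I would record the Lipschitz bound: specializing \eqref{Cont_Y} to $\tau=0$ and $q=1$ (legitimate since $0=\tau<t_0\le t$) yields
\[
\|Y(t,\cdot)-Y(t_0,\cdot)\|_{L^1(\R^N)}\le C\,(t-t_0)\,t_0^{\alpha-2}.
\]

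Then I would combine the two by writing
\[
\|Y(t,\cdot)-Y(t_0,\cdot)\|_{L^1}=\|Y(t,\cdot)-Y(t_0,\cdot)\|_{L^1}^{\,1-\gamma}\,\|Y(t,\cdot)-Y(t_0,\cdot)\|_{L^1}^{\,\gamma},
\]
bounding the first factor by the crude estimate and the second by the Lipschitz estimate. This produces a constant $(2\|G\|_{L^1})^{1-\gamma}C^\gamma$ times $(t-t_0)^\gamma$ times $t_0^{(\alpha-1)(1-\gamma)+(\alpha-2)\gamma}$, and a one-line computation of the exponent gives $(\alpha-1)(1-\gamma)+(\alpha-2)\gamma=\alpha-1-\gamma$, which is exactly the claimed power of $t_0$. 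Absorbing the numerical prefactor into $C$ completes the argument.

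There is no real obstacle here; the only points requiring a word of care are that $\gamma\in(0,1)$ is needed precisely so both exponents $1-\gamma$ and $\gamma$ are admissible in the interpolation, that $\alpha<1$ is used to turn $t^{\alpha-1}$ into $t_0^{\alpha-1}$, and that the hypothesis $0<t_0\le t$ guarantees \eqref{Cont_Y} is applicable with $\tau=0$. One could alternatively phrase the interpolation as: if $A\le B$ and $A\le D$ then $A=A^{1-\gamma}A^\gamma\le B^{1-\gamma}D^\gamma$, which is the elementary inequality underlying the step.
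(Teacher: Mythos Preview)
Your proposal is correct and follows essentially the same approach as the paper's own proof: record the crude bound from \eqref{Es-Y} with $q=1$ and the triangle inequality, record the Lipschitz bound from \eqref{Cont_Y} with $\tau=0$ and $q=1$, and interpolate by raising the former to the power $1-\gamma$ and the latter to the power $\gamma$. The only cosmetic difference is that you spell out the elementary inequality $A\le B^{1-\gamma}D^\gamma$ and the exponent arithmetic $(\alpha-1)(1-\gamma)+(\alpha-2)\gamma=\alpha-1-\gamma$ a bit more explicitly than the paper does.
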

\begin{proof}
Let $0 < t_0 \leq t $ and $\gamma \in (0,1)$. From the time continuity estimate \eqref{Cont_Y} with $\tau = 0$ and $q = 1$, we have
\begin{equation*}
\|Y(t,\cdot) - Y(t_0, \cdot)\|_{L^1(\mathbb{R}^N)} \leq C (t - t_0) t_0^{\alpha - 2}.
\end{equation*}
Moreover, by the bound \eqref{Es-Y} with $q = 1$, we get
\begin{equation*}
\|Y(t,\cdot) - Y(t_0,\cdot)\|_{L^1(\mathbb{R}^N)} \leq \|Y(t,\cdot)\|_{L^1} + \|Y(t_0,\cdot)\|_{L^1} \leq C(t^{\alpha-1} + t_0^{\alpha-1}) \leq C t_0^{\alpha - 1},
\end{equation*}
where we used $t \geq t_0> 0$ and $\alpha - 1 < 0$. Interpolating between these two estimates yields
\begin{align*}
\|Y(t,\cdot) - Y(t_0,\cdot)\|_{L^1(\mathbb{R}^N)} \leq C\left[ (t - t_0) t_0^{\alpha - 2}\right]^\gamma  \left[t_0^{\alpha - 1}\right]^{1-\gamma} = C (t - t_0)^\gamma t_0^{\alpha - 1 - \gamma}.
\end{align*}   
\end{proof}

For a parameter \( \ts \in (0,1) \) and a function \( u : \mathbb{R}^N \to \mathbb{R} \), an equivalent definition of the fractional Laplacian of order \( \ts \) at a point \( x \in \mathbb{R}^N \) is given by
\begin{align*}
(-\Delta)^{\ts} u(x)
    = C_{N,\ts} \, \lim_{\varepsilon \to 0^{+}}
      \int_{\mathbb{R}^N \setminus B_{\varepsilon}(x)}
      \frac{u(x) - u(y)}{|x-y|^{N+2\ts}}\, dy,
\end{align*}
whenever the limit exists. Here \( B_{\varepsilon}(x) \) denotes the ball of radius \( \varepsilon \) centered at \( x \), and
\[
C_{N,\ts}
    = \frac{2^{\,2\ts - 1}\, \Gamma\!\left( \frac{N}{2} + \ts \right)}
           {\pi^{N/2}\, \Gamma(1 - \ts)}
\]
is the normalization constant ensuring that \( (-\Delta)^{\ts} \) converges to the classical Laplacian as \( \ts \to 1^{-} \). Further equivalent formulations and additional properties of the fractional Laplacian can be found in \cite{Book-FC, M, Guide}.

We now recall a fundamental symmetry property that plays a central role in fractional integration by parts.

\begin{lem} \label{I-F}
Let \( \ts \in (0,1) \). For any functions \( u, v \in C^{2}(\mathbb{R}^{N}) \cap L^{\infty}(\mathbb{R}^{N}) \), the fractional Laplacian satisfies the identity
\begin{align*}
\int_{\mathbb{R}^N} u(x)\, (-\Delta)^{\ts} v(x)\, dx
    = \int_{\mathbb{R}^N} v(x)\, (-\Delta)^{\ts} u(x)\, dx.
\end{align*}
\end{lem}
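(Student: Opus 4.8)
The plan is to reduce both sides to a manifestly symmetric bilinear form built out of second-order differences. First I would rewrite the pointwise singular-integral definition of $(-\Delta)^{\ts}$ in a form that no longer needs a principal value. For $w \in C^{2}(\mathbb{R}^{N}) \cap L^{\infty}(\mathbb{R}^{N})$, substituting $y = x+z$ in the defining integral and averaging over $z \mapsto -z$ gives the absolutely convergent representation
\[
(-\Delta)^{\ts} w(x) = -\frac{C_{N,\ts}}{2} \int_{\mathbb{R}^N} \frac{w(x+z) + w(x-z) - 2w(x)}{|z|^{N+2\ts}}\, dz,
\]
where absolute convergence near $z = 0$ uses the Taylor bound $|w(x+z)+w(x-z)-2w(x)| \le \|D^{2}w\|_{L^{\infty}} |z|^{2}$ together with $2\ts < 2$, and integrability at infinity uses $w \in L^{\infty}$ together with $2\ts > 0$. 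In particular $(-\Delta)^{\ts} v$ is a bounded continuous function, so the left-hand side of the identity is well defined (in the applications one of $u, v$ carries additional spatial decay, which is the only place such decay is needed).

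Next I would insert this representation into $\int_{\mathbb{R}^N} u(x) (-\Delta)^{\ts} v(x)\, dx$ and interchange the $x$- and $z$-integrations. For each fixed $z$, I would change variables in the inner $x$-integral: $x \mapsto x - z$ in the term containing $v(x+z)$ and $x \mapsto x+z$ in the term containing $v(x-z)$ (both Jacobians equal $1$), which yields, for every $z \neq 0$,
\[
\int_{\mathbb{R}^N} u(x)\bigl(v(x+z)+v(x-z)-2v(x)\bigr)\, dx = \int_{\mathbb{R}^N} \bigl(u(x+z)+u(x-z)-2u(x)\bigr) v(x)\, dx .
\]
Dividing by $|z|^{N+2\ts}$, integrating in $z$, and reading the identity backwards with the roles of $u$ and $v$ exchanged gives exactly $\int_{\mathbb{R}^N} v(x) (-\Delta)^{\ts} u(x)\, dx$. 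Equivalently, one may record the symmetric quadratic-form identity
\[
\int_{\mathbb{R}^N} u\, (-\Delta)^{\ts} v\, dx = \frac{C_{N,\ts}}{2} \int_{\mathbb{R}^N} \int_{\mathbb{R}^N} \frac{\bigl(u(x)-u(y)\bigr)\bigl(v(x)-v(y)\bigr)}{|x-y|^{N+2\ts}}\, dy\, dx,
\]
whose right-hand side is visibly invariant under swapping $u$ and $v$.

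The main technical point, and the only genuine obstacle, is justifying Fubini's theorem and the change of variables, since under the bare hypotheses $u, v \in C^{2} \cap L^{\infty}$ the double integral need not converge absolutely. I would handle this by first working with the truncated kernel $|z|^{-N-2\ts}\mathbf{1}_{\{|z| > \varepsilon\}}$ and, if needed, a spatial cutoff, for which the interchange of order and the translations are immediate; I would then pass to the limit using dominated convergence, the $O(|z|^{2})$ bound near the origin, the $L^{\infty}$ bound at infinity, and whatever decay of $u$ or $v$ is available in the intended application. The remaining steps — the passage from the principal-value form to the second-difference form and the elementary substitutions — are routine.
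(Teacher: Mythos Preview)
The paper does not actually prove this lemma: it is stated as a recalled fact (``We now recall a fundamental symmetry property\ldots'') with implicit reference to the standard literature on the fractional Laplacian, and no argument is supplied. So there is nothing to compare your proposal against.

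That said, your approach is the standard one and is correct in outline: passing to the second-difference representation to remove the principal value, and then recognizing the symmetric bilinear form
\[
\frac{C_{N,\ts}}{2}\int_{\mathbb{R}^N}\int_{\mathbb{R}^N}\frac{(u(x)-u(y))(v(x)-v(y))}{|x-y|^{N+2\ts}}\,dy\,dx
\]
is exactly how this identity is established in the references the paper cites. You are also right to flag that under the bare hypotheses $C^{2}\cap L^{\infty}$ neither side of the stated identity need be finite, so some decay on at least one of the functions is implicitly required; in the paper's application (the test-function argument in the proof of Theorem~\ref{thm:blowup}) the test function $\psi_R$ is compactly supported, which supplies exactly the decay you anticipate. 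Your truncation-and-limit justification of Fubini is the right way to make this rigorous once that decay is in hand.
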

We also recall the following form of the singular Gronwall inequality.

\begin{lem}\label{lem:singular-Gronwall}
Let \( T > 0 \), \( \gamma > -1 \), and \( K > 0 \).  
Assume that \( \varphi \in C([0,T]; \mathbb{R}_+) \) satisfies
\begin{align*}
\varphi(t)
    \le K \int_{0}^{t} (t-\tau)^{\gamma} \varphi(\tau)\, d\tau,
    \qquad \text{for all } t \in [0,T].
\end{align*}
Then \( \varphi(t) = 0 \) for all \( t \in [0,T] \).
\end{lem}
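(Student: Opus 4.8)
The plan is to iterate the integral inequality in the spirit of a Picard argument and then exploit the super\-exponential growth of the Gamma function. Since $\varphi$ is continuous on the compact interval $[0,T]$, set $M \vcentcolon= \|\varphi\|_{L^\infty([0,T])} < \infty$. Feeding the crude bound $\varphi \le M$ into the right\-hand side of the hypothesis gives
\[
\varphi(t) \le K M \int_0^t (t-\tau)^\gamma\, d\tau = \frac{KM}{\gamma+1}\, t^{\gamma+1},
\qquad t \in [0,T],
\]
which is legitimate precisely because $\gamma+1>0$ makes the kernel $(t-\tau)^\gamma$ integrable near $\tau=t$.

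The key step is to upgrade this, by induction on $n \ge 1$, to the refined bound
\[
\varphi(t) \le M\,\frac{\bigl(K\,\Gamma(\gamma+1)\bigr)^{n}}{\Gamma\bigl(n(\gamma+1)+1\bigr)}\; t^{\,n(\gamma+1)},
\qquad t \in [0,T].
\]
The base case $n=1$ is the display above after rewriting $\tfrac{1}{\gamma+1}=\tfrac{\Gamma(\gamma+1)}{\Gamma(\gamma+2)}$. For the inductive step, insert the $n$\-th bound into the right\-hand side of the hypothesis and evaluate the resulting integral via the Beta identity
\[
\int_0^t (t-\tau)^{a-1}\tau^{b-1}\, d\tau = t^{\,a+b-1}\,\frac{\Gamma(a)\,\Gamma(b)}{\Gamma(a+b)}
\]
with $a=\gamma+1$ and $b=n(\gamma+1)+1$ (both positive thanks to $\gamma>-1$); the factor $\Gamma\bigl(n(\gamma+1)+1\bigr)$ cancels and one lands exactly on the $(n+1)$\-th bound.

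Finally, fix $t\in[0,T]$ and let $n\to\infty$. The numerator is dominated by $\bigl(K\,\Gamma(\gamma+1)\,T^{\gamma+1}\bigr)^n$, which grows at most exponentially in $n$, whereas $\Gamma\bigl(n(\gamma+1)+1\bigr)$ grows factorially (by Stirling, faster than any exponential in $n$); hence the upper bound tends to $0$, forcing $\varphi(t)=0$. I do not expect a genuine obstacle here: the only point requiring care is the Gamma/Beta bookkeeping in the induction and checking that every Beta argument stays positive, which is exactly where the hypothesis $\gamma>-1$ enters. As an equally short alternative avoiding special functions, one can bootstrap over subintervals of length $\delta$ chosen so that $K\delta^{\gamma+1}/(\gamma+1)<1$: on $[0,\delta]$ the inequality gives $\sup_{[0,\delta]}\varphi \le \bigl(K\delta^{\gamma+1}/(\gamma+1)\bigr)\sup_{[0,\delta]}\varphi$, whence $\varphi\equiv0$ there; the memory term then contributes nothing, so the identical estimate propagates to $[\delta,2\delta]$, then $[2\delta,3\delta]$, and so on until $[0,T]$ is exhausted.
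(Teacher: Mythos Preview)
Your proof is correct. Both routes you sketch---the Picard-type iteration with the Beta/Gamma bookkeeping, and the subinterval bootstrap---are sound; the inductive computation is accurate (the cancellation of $\Gamma(n(\gamma+1)+1)$ via the Beta identity works exactly as you say, and all Beta arguments stay positive thanks to $\gamma>-1$), and the bootstrap alternative is also fine once one observes that the contribution from the already-vanished portion of $[0,T]$ is literally zero.

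As for comparison with the paper: there is nothing to compare. The paper states Lemma~\ref{lem:singular-Gronwall} as a recalled tool (``We also recall the following form of the singular Gronwall inequality'') and gives no proof of it. Your argument therefore supplies what the paper omits; either of your two approaches would serve as a self-contained justification.
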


We now establish global existence in Lebesgue spaces \( L^{q} \) under suitable assumptions on the parameters \( p, q \), together with a smallness condition.

\begin{lem}\label{lem:W-Condition}
Assume \( \alpha, \ts \in (0,1) \), \( -\alpha < \sigma < 0 \), and \( 2\ts < N \).  
Suppose
\begin{align}
\label{C-p}
\frac{N\alpha - 2\ts\sigma}{N\alpha - 2\ts(\alpha + \sigma)}
    \leq p < \frac{N\alpha - 2\ts q\, \sigma}{N\alpha - 2\ts q\, (\alpha + \sigma)},
\end{align}
and let \( q \) belong to the admissible range
\[
p_c < q \leq p\, p_c, 
\qquad q \geq p,
\]
where \( p_c \) is given in \eqref{p-c}.  
Let \( u \) be a maximal \( L^{q} \)-solution of \eqref{main} corresponding to initial data  
\( u_0 \in L^{q}(\mathbb{R}^N) \), and assume \( \bw \in L^{q}(\mathbb{R}^N) \). Then there exists a constant \( \mathscr{M} > 0 \), depending only on  
\( p, q, \alpha, \ts, N \), and \( \sigma \), such that if
\begin{equation}\label{cond:smallness}
t^{\,\frac{\alpha}{p-1} - \frac{N\alpha}{2\ts q}}
\Bigl(
    \|\mathcal{L}_{u_0}(t,\cdot)\|_{L^{q}}
    + \|\mathcal{S}_{\bw}\|_{L^{q}}
\Bigr)
    \leq \mathscr{M},
    \qquad \forall\, t > 0,
\end{equation}
then the following an a priori bound:
\begin{equation}\label{est:apriori}
\sup_{\tau \in (0,t)}
\tau^{\,\frac{\alpha}{p-1} - \frac{N\alpha}{2\ts q}}
\|u(\tau,\cdot)\|_{L^{q}}
    \leq 2\mathscr{M},
\qquad \forall\, t \in (0, T_{\max}),
\end{equation}
where \( T_{\max} \in (0,\infty] \) denotes the maximal existence time.
In particular, \( u \) is a global \( L^{q} \)-solution of \eqref{main}.
\end{lem}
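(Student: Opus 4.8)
The plan is to run a continuity (bootstrap) argument on the weighted quantity
\[
\Phi(t) \vcentcolon= \sup_{\tau \in (0,t)} \tau^{\,\frac{\alpha}{p-1} - \frac{N\alpha}{2\ts q}} \|u(\tau,\cdot)\|_{L^{q}},
\]
defined on $(0, T_{\max})$. First I would record the exponent bookkeeping: set $\theta \vcentcolon= \frac{\alpha}{p-1} - \frac{N\alpha}{2\ts q}$; the hypotheses $p_c < q \le p\,p_c$ and \eqref{C-p} are exactly what make $\theta \in \bigl[0, \tfrac1p\bigr)$ and, crucially, make all the time-integral exponents appearing below strictly larger than $-1$ so the $\tau$-integrals converge near $0$ and near $t$. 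I would verify once and for all the key numerology: in the Duhamel term $\mathcal N_u$, estimating $\||u(\tau)|^p\|_{L^{q/p}} = \|u(\tau)\|_{L^q}^p \le \Phi(t)^p \tau^{-p\theta}$ and pairing with the $Y$-kernel bound \eqref{Es-Y} in the exponent $r$ determined by $1 + \tfrac1q = \tfrac1r + \tfrac{p}{q}$ (Young's inequality), one gets
\[
\|\mathcal N_u(t,\cdot)\|_{L^q} \le C\,\Phi(t)^p \int_0^t (t-\tau)^{-\bigl(1-\alpha+\frac{N\alpha}{2\ts}(1-\frac1r)\bigr)} \tau^{-p\theta}\, d\tau = C\,\Phi(t)^p\, t^{-\theta},
\]
the last equality by the Beta-integral identity $\int_0^t (t-\tau)^{a-1}\tau^{b-1}d\tau = B(a,b)\, t^{a+b-1}$, where the admissibility range for $q$ guarantees $a = \alpha - \frac{N\alpha}{2\ts}(1-\tfrac1r) > 0$ and $b = 1 - p\theta > 0$ and $a + b - 1 = -\theta$ — this algebraic collapse to the same power $t^{-\theta}$ is the heart of the matter and is forced by the scaling choice of $\theta$. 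I expect this identity-checking step to be the main obstacle: one must confirm that the three relevant exponents (one from $\mathcal L_{u_0}$, already supplied by hypothesis \eqref{cond:smallness}; one from $\mathcal N_u$; one from $\mathcal S_{\bw}$, again inside \eqref{cond:smallness}) all produce precisely the weight $t^{-\theta}$, with no leftover positive or negative power of $t$.

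Granting that, multiplying the Duhamel identity \eqref{F-D} by $t^{\theta}$ and taking $\sup_{\tau\in(0,t)}$ yields the closed inequality
\[
\Phi(t) \le \mathscr M + C\,\Phi(t)^p,
\qquad \forall\, t \in (0, T_{\max}),
\]
where the $\mathscr M$ absorbs the two linear pieces via \eqref{cond:smallness} and $C = C(p,q,\alpha,\ts,N,\sigma)$ is the Beta-integral constant from the nonlinear term. I would then fix $\mathscr M$ small enough — explicitly, any $\mathscr M$ with $C(2\mathscr M)^{p} \le \mathscr M$, i.e. $\mathscr M \le \bigl(2^p C\bigr)^{-1/(p-1)}$ works — so that the set $\{X \ge 0 : X \le \mathscr M + C X^p\}$ is disconnected, with its lower component contained in $[0, 2\mathscr M)$ and separated by a gap from the upper component. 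Since $\Phi$ is continuous on $(0,T_{\max})$ (it is a supremum of a continuous function, and $\tau^{\theta}\|u(\tau)\|_{L^q} \to 0$ as $\tau \to 0^+$ because $u \in L^\infty_{\mathrm{loc}}\bigl([0,T_{\max}); L^q\bigr)$ and $\theta \ge 0$, so $\Phi(0^+) = 0 < 2\mathscr M$ lies in the lower component) and never takes values in the gap, it stays trapped in the lower component for all $t < T_{\max}$. This proves \eqref{est:apriori}.

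Finally, for the global-existence conclusion I would argue by contradiction: if $T_{\max} < \infty$, the blow-up alternative from the local theory (Theorem~\ref{thm:Loc-Exis} applied at times approaching $T_{\max}$) forces $\|u(t,\cdot)\|_{L^q} \to \infty$ as $t \uparrow T_{\max}$; but \eqref{est:apriori} gives $\|u(t,\cdot)\|_{L^q} \le 2\mathscr M\, t^{-\theta} \le 2\mathscr M\, (T_{\max}/2)^{-\theta}$ for $t \in [T_{\max}/2, T_{\max})$, a finite bound, contradiction. Hence $T_{\max} = \infty$ and $u$ is a global $L^q$-solution. One minor point to dispatch along the way: the continuity of $t \mapsto \Phi(t)$ and the membership $u \in C\bigl((0,T_{\max}); L^q\bigr)$ needed to justify taking suprema and passing to the limit — these follow from the kernel continuity estimates \eqref{Cont_Y} and Lemma~\ref{ES_H} exactly as in the proof of Theorem~\ref{thm:Loc-Exis}, so I would simply cite that.
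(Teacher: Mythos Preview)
Your bootstrap argument for the a~priori bound \eqref{est:apriori} is correct and matches the paper's core step: both derive the closed inequality $\Phi(t) \le \mathscr{M} + C\,\Phi(t)^p$ from the Duhamel representation, with the Beta-integral collapsing the nonlinear term to the same weight $t^{-\theta}$, and then trap $\Phi$ in the small component by continuity. Your exposition of this part is in fact cleaner than the paper's: you invoke hypothesis \eqref{cond:smallness} directly for both linear pieces $\mathcal L_{u_0}$ and $\mathcal S_{\bw}$, whereas the paper re-estimates $\mathcal S_{\bw}$ from scratch and picks up an extra $\mathscr M_2\, t^{\beta+\sigma+\alpha}$ term that then has to be absorbed.

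There is, however, a genuine gap in your final paragraph. You write that the blow-up alternative follows from ``Theorem~\ref{thm:Loc-Exis} applied at times approaching $T_{\max}$''. For a semigroup equation this would be fine: if $\|u(t_0)\|_{L^q}$ is bounded, restart the local existence machine at $t_0$. But \eqref{main} involves the Caputo derivative, so the mild formulation \eqref{F-D} is \emph{nonlocal in time}: the value $u(t)$ for $t > t_0$ depends on the entire history $u|_{[0,t_0]}$, not just on $u(t_0)$. Theorem~\ref{thm:Loc-Exis} produces a solution starting from data at time $0$; it does not directly give you a solution on $[T_{\max}-h, T_{\max}+h)$ that glues onto the existing one. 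The paper addresses exactly this point and devotes the bulk of its proof of the lemma to it: it builds an extension operator $\mathcal H$ on $[T_{\max}-h, T_{\max}+h)$ that carries the full history integral $\int_0^{T_{\max}-h}|u|^p\,Y\,d\tau$ as a fixed inhomogeneity, decomposes $\mathcal S_{\bw}$ into four pieces around $T_{\max}$, and shows $\mathcal H$ is a contraction for $h$ small. Only after this extension argument is the blow-up alternative \eqref{eq:blowup} secured, and only then does your a~priori bound force $T_{\max} = \infty$. You should either reproduce that extension step or explicitly flag that the blow-up criterion for memory equations requires a separate (and nonstandard) construction beyond Theorem~\ref{thm:Loc-Exis}.
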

\begin{proof}[Proof of Lemma~\ref{lem:W-Condition}]  First, we establish the existence of a maximum time $T_{\text{max}} > 0$ and a maximal solution
$u \in C\big([0, T_{\text{max}}); L^q(\mathbb{R}^N)\big)$
satisfying the integral equation \eqref{F-D}. From the results of \cite{CVPDE-2024}, we already know that
$\mathcal{L}_{u_0},\mathcal{N}_u \in C([0,T]; L^q)$. To conclude that the full solution $u$ belongs to $C([0,T]; L^q)$, it remains to prove the continuity of the term $S_{\bw}(t)$.
We analyze the continuity at $t=0$ and for $t>0$ separately.

Let $\alpha \in (0,1)$, $\sigma + \alpha > 0$, and $\bw \in L^q(\mathbb{R}^N)$. Using \eqref{Es-Y} and Young's inequality, we get
\begin{align}
\label{Continuity $t = 0$}
\|\mathcal{S}_\bw(t)\|_{L^q} \leq \|\bw\|_{L^q} \int\limits_0^t \tau^\sigma \|Y(t-\tau)\|_{L^1}  ds \leq C \|\bw\|_{L^q} \int\limits_0^t \tau^\sigma (t-\tau)^{\alpha-1}  d\tau.
\end{align}
The integral is equal to $t^{\sigma+\alpha} \mathscr{B}(\sigma+1, \alpha)$. Since $\sigma + \alpha > 0$, we have
$$
\|\mathcal{S}_\bw(t)\|_{L^q} \leq C \|\bw\|_{L^q} t^{\sigma+\alpha} \to 0 \quad \text{as } t \to 0^+.$$
On the other hand, let $0 \leq t_1 < t_2 \leq T$. We decompose
$$\mathcal{S}_\bw(t_2) - \mathcal{S}_\bw(t_1) = \int\limits_{t_1}^{t_2} \tau^\sigma (Y(t_2-\tau) * \bw)  d\tau + \int\limits_0^{t_1} \tau^\sigma \left(Y(t_2-\tau) - Y(t_1-\tau)\right) * \bw  d\tau.$$
For the first term, an application of Minkowski's inequality together with the kernel estimate from Lemma~\ref{lem:kernels}, while taking into account that $\tau \in [t_1, t_2] \subset [0, T]$, yields
\begin{align}
\label{Es-1}
\left\| \int\limits_{t_1}^{t_2} \tau^\sigma Y(t_2 - \tau) * \bw  d\tau \right\|_{L^q} &\leq \| \bw \|_{L^q} \int\limits_{t_1}^{t_2} \tau^\sigma \| Y(t_2 - \tau) \|_{L^1}  d\tau \\ 
&\leq C \| \bw \|_{L^q} \int\limits_{t_1}^{t_2} \tau^\sigma (t_2 - \tau)^{\alpha - 1} d\tau &\leq \frac{C t_1^{\sigma}}{\alpha} \| \bw \|_{L^q} (t_2 - t_1)^\alpha.
\end{align}
We now turn to the second term. By applying Lemma \ref{ES_H} with the choices $t_0 = t_1 - \tau$, $t = t_2 - \tau$, and with $\gamma$ restricted to $(0,\alpha+\sigma)$, we obtain 
\begin{align}
\label{Es-2}
\left\| \int\limits_0^{t_1} \tau^\sigma \left[ Y(t_2 - \tau,.) - Y(t_1 - \tau,.) \right] * \bw  d\tau \right\|_{L^q} &\leq C_\gamma \| \bw \|_{L^q} (t_2 - t_1)^\gamma \int\limits_0^{t_1} \tau^\sigma (t_1 - \tau)^{\alpha - 1 - \gamma}  d\tau\\
&\nonumber \leq C_\gamma T^{\alpha+\sigma-\gamma} \mathscr{B}(\sigma+1,\alpha-\gamma)\|\bw\|_{L^q} (t_2-t_1)^{\gamma}
\end{align}
Combining \eqref{Es-1} and \eqref{Es-2}, we conclude that
$$
\|\mathcal{S}_\bw(t_2) - \mathcal{S}_\bw(t_1) \|_{L^q} \leq C \| \bw \|_{L^q} \left( t_1^\sigma|t_2 - t_1|^\alpha + |t_2 - t_1|^\gamma \right).
$$
where C depends on $\gamma$, $\alpha$, and $T$.
This bound tends to zero as $|t_2 - t_1| \to 0$, which establishes that $\mathcal{S}_\bw \in C([0,T]; L^q)$.
Since we have shown that $u \in C([0, T_{\max}); L^q)$, it follows that the limit $u(\cdot, T_{\max}) := \lim_{t \nearrow T_{\max}} u(\cdot, t)$ exists in the $L^q(\mathbb{R}^N)$ .

Subsequently, we prove that if $T_{\text{max}} < \infty$, then
$\limsup_{t \nearrow T_{\text{max}}} \|u(\cdot, t)\|_{L^q} = +\infty.$
We proceed by contradiction, assuming that $T := T_{\text{max}} < \infty$ but that
$\limsup_{t \nearrow T} \|u(\cdot, t)\|_{L^q} < +\infty,$ then we need to prove first an extension result that will characterize the behavior of the maximal solution at the boundary of it's existence interval.

Let $h$ be such that $0 < h < T_{\text{max}}/2$. For any $t \in (T_{max} - h, T_{max} + h)$, we define
\[
\begin{aligned}
\label{S-M}
\mathcal{H}v(t,x) = &\underbrace{\int\limits_{\mathbb{R}^N} u_0(y) Z(t,x-y)  dy}_{\mathcal{L}_{u_0}(t,x)} 
+ \underbrace{\int\limits_0^{T_{max}-h} \int\limits_{\mathbb{R}^N} |u(\tau,y)|^{p} Y(t-\tau,x-y)  dy  d\tau}_{\mathcal{N}_u(t,x)} \\
&+ \underbrace{\int\limits_{T_{max}-h}^t \int\limits_{\mathbb{R}^N} |v(\tau,y)|^{p} Y(t-\tau,x-y)  dy  d\tau}_{\mathcal{F}_v(t,x)} 
+ \underbrace{\int\limits_0^t \int\limits_{\mathbb{R}^N} \tau^{\sigma} \bw(y) Y(t-\tau,x-y)  dy  d\tau}_{\mathcal{S}_{\bw}(t,x)}.
\end{aligned}
\]
We establish a refined decomposition of $\mathcal{S}_\bw(t)$ that depends on the position of the parameter $t$
\begin{align}
\label{Decomp-S}
\mathcal{S}_{\bw}(t) = \mathcal{S}_{1,\bw} + \mathcal{S}_{2,\bw}(t) + \mathcal{S}_{3,\bw}(t) + \mathcal{S}_{4,\bw}(t),
\end{align} \text{where}
\[
\begin{aligned}
\mathcal{S}_{1,\mathbf{\bw}} &= \int\limits_0^{T_{max} -h} \tau^\sigma \left( Y(T_{max} - \tau,.) * \bw \right)  d\tau, \\
\mathcal{S}_{2,\mathbf{\bw}}(t) &= \int\limits_0^{T_{max} - h} \tau^\sigma \left( Y(t - \tau,.) - Y(T_{max} - \tau,.) \right) * \bw  d\tau, \\
\mathcal{S}_{3,\mathbf{\bw}}(t) &= \int\limits_{T_{max} - h}^{\min(t,T_{max})} \tau^\sigma \left( Y(t - \tau,.) * \bw \right)  d\tau, \\
\mathcal{S}_{4,\mathbf{\bw}}(t) &= \int\limits_{T_{max}}^t \tau^\sigma \left( Y(t - \tau,.) * \bw \right)  d\tau, \text{for} \quad t > T_{max}.
\end{aligned}
\]
We construct a solution $v$ that coincides with \eqref{S-M} on the interval $[0, T_{\max} - h)$, where $\tau \in (0, T_{\max}/2)$ is sufficiently small. For positive parameters $h>0$ and $\delta>0$, we work in the space
$$\mathcal{X}_{h,\delta} = \left\{ v \in L^\infty\big( (T_{max} - h, T_{max} + h); L^q(\mathbb{R}^N) \big) :\sup_{t \in [T_{max}-h,T_{max}+h]} \|v(t,\cdot)\|_{L^q} \leq \delta \right\}.$$
Under the assumptions on $\sigma$ and $\alpha$, applying the kernel estimates \eqref{Es-Y}, \eqref{Cont_Y}, for $Y$ and $Z$ together with Young's inequality and the $\gamma$- H\"older regularity estimate \eqref{ES_H} yields
\begin{align}
\label{Es-S_1}    
\| \mathcal{S}_{1,\bw} \|_{L^q} &\leq \| \bw \|_{L^q} \int\limits_0^{T_{max} - h} \tau^\sigma \| Y(T_{max} - \tau,.) \|_{L^1}  d\tau \\
&\nonumber \leq C \| \bw \|_{L^q} \int\limits_0^{T_{max} - h} \tau^\sigma (T_{max} - \tau)^{\alpha - 1}  d\tau \leq C \| \bw \|_{L^q} T_{max}^{\sigma + \alpha} \mathscr{B}(\sigma + 1, \alpha) =: \mathscr{K}_1.
\end{align}
The estimate for $\mathcal{S}_{2,\bw}$ follows using analogous techniques. Under the hypothesis $0 < \gamma < \alpha$, Lemma \ref{ES_H} gives
\begin{align}
\label{Es-S_2}    
\|\mathcal{S}_{2,\bw}\|_{L^q} &\leq \|\bw\|_{L^q} \int\limits_0^{T_{max}-h} \tau^\sigma \|Y(t-\tau,.) - Y(.,T_{max}-\tau)\|_{L^1}  d\tau\\&  \leq C_\gamma \|\bw\|_{L^q}  h^\gamma \int\limits_0^{T_{max}-h} \tau^\sigma (T_{max}-\tau)^{\alpha-1-\gamma}  d\tau \\
&\leq C_\gamma \|\bw\|_{L^q}  h^\gamma  T_{max}^{\sigma+\alpha-\gamma}  \mathscr{B}(\sigma+1, \alpha-\gamma) =: \mathscr{K}_2  h^\gamma.
\end{align}
For $\tau \in [T_{max} -h, \min(t,T_{max})]$, we have $\tau \geq T_{max} - h \geq T_{max}/2$, so $\tau^\sigma \leq (T_{max}/2)^\sigma$
\begin{align}
\|\mathcal{S}_{3,\bw}\|_{L^q} 
&\leq \|\bw\|_{L^q} \int\limits_{T_{max}-h}^{\min(t,T_{max})} \tau^\sigma \|Y(t-\tau,.)\|_{L^1}  d\tau \\ \label{Es-S_3}
&\leq C\|\bw\|_{L^q} \int\limits_{T_{max}-h}^{\min(t,T_{max})} \tau^\sigma (t-\tau)^{\alpha-1} d\tau \\& C \|\bw\|_{L^q} T_{max}^\sigma h^\alpha =: \mathscr{K}_3 h^\alpha.
\end{align}
Similarly, for $t > T_{max}$ and  $\tau \in [T_{max}, t]$, we have $\tau \geq T_{max} \geq T_{max}/2 \Rightarrow \tau^\sigma \leq (T_{max}/2)^\sigma$, and 
\begin{align}
\label{Es-S_4}
\|\mathcal{S}_{4,\bw}\|_{L^q} &\leq \|\bw\|_{L^q} \int\limits_{T_{max}}^t \tau^\sigma \|Y(t - \tau,.)\|_{L^1}  d\tau \\
&\nonumber \leq C \|\bw\|_{L^q} \int\limits_{T_{max}}^t \tau^\sigma (t - \tau)^{\alpha - 1}  d\tau \leq C T_{max}^{\sigma} h^{\alpha} \|\bw\|_{L^q} =: \mathscr{K}_4 h^\alpha.
\end{align}
We combine the estimates  \eqref{Es-S_1}, \eqref{Es-S_2}, \eqref{Es-S_3}, and \eqref{Es-S_4}. By choosing $h$ small enough so that
$$\mathscr{K}_2 h^\gamma + (\mathscr{K}_3 + \mathscr{K}_4)h^\alpha \leq \frac{\mathscr{K}_1}{2}.$$
Based on the estimates for $\mathcal{L}_{u_0}$, $\mathcal{N}_u$, and $\mathcal{F}_v$ obtained in \cite{CVPDE-2024}, there exists, for all sufficiently small $\mathscr{K}$, a constant such that
$$\|\mathcal{H}v\|_{L^q} \leq \|\mathcal{L}_{u_0}(t) + \mathcal{N}_u(t) + \mathcal{F}_v(t)\|_{L^q}+\|\mathcal{S}_\bw\|_{L^q} \leq \mathscr{K}.$$
Next, we establish the contractivity of $\mathcal{G}$ on $X_{h,\delta}$ for sufficiently small $h$. Specifically, for any $u, v \in X_{h,\delta}$ and $t \in [T_{\max}-h, T_{\max} + h]$,
\begin{align*}
\|\mathcal{G}(u)(t) - \mathcal{G}(v)(t)\|_{L^q} 
&\leq \int\limits_{T_{\max}-h}^t \left\| |u(\tau)|^{p-1}u(\tau) - |v(\tau)|^{p-1}v(\tau) \right\|_{L^{q/p}} \|Y(t-\tau,\cdot)\|_{L^r} \, d\tau \\
&\leq C \int\limits_{T_{\max}-h}^t \|u(\tau) - v(\tau)\|_{L^q} \left( \|u(\tau)\|_{L^q}^{p-1} + \|v(\tau)\|_{L^q}^{p-1} \right) (t-\tau)^{\alpha-1-\frac{\alpha N}{2\ts}\left(1-\frac{1}{r}\right)} \, d\tau\\
&\leq 2C\delta^{p-1} \|u - v\|_{L^\infty(T_{\max}-h,T_{\max}+h;L^q)} \int\limits_{T_{\max}-h}^t (t-\tau)^{\alpha-1-\frac{\alpha N}{2\ts}\left(1-\frac{1}{r}\right)} \, d\tau\\
&\leq C\delta^{p-1} h^{\alpha-\frac{\alpha N}{2\ts}\left(\frac{p-1}{q}\right)} \|u - v\|_{L^\infty(T_{\max}-h,T_{\max}+h;L^q)},
\end{align*}
Choosing $h$ small enough such that
$$ C\delta^{p-1} h^{\alpha-\frac{\alpha N}{2\ts}\left(\frac{p-1}{q}\right)}\leq\frac{1}{2}.$$
Hence, $\mathcal{G}$ is a contraction on $X_{h,\delta}$, and therefore admits a unique fixed point $v \in X_{h,\delta}$. Moreover, since $
v(T_{\max}) = \mathcal{G}(v)(T_{\max}) = u(T_{\max}),$
the function $v$ defines the desired extension:
$$
v(t,x) = \begin{cases} 
u(t,x), & t \in [0, T_{\max} - h), \\
\tilde{u}(t,x), & t \in [T_{\max} - h, T_{\max} + h).
\end{cases}
$$
Since at this stage $\|u\|_{L^q}$ remains bounded, we can extend the solution to the interval $[0, T_{\max} + h)$. 
Assume, by contradiction, that $T_{\max} < \infty$ and
$$
\sup_{t \in [0, T_{\max})} \|u(t)\|_{L^q} \leq M < \infty.
$$
From the continuity result established earlier, the mapping $t \mapsto u(\cdot, t)$ is continuous in $L^q(\mathbb{R}^N)$ on $[0, T_{\max})$. Therefore, the limit
$$
u(T_{\max}, \cdot) := \lim_{t \to T_{\max}^{-}} u(t,\cdot)
$$
exists in $L^q(\mathbb{R}^N)$, and in particular, $u(\cdot, T_{\max}) \in L^q(\mathbb{R}^N)$.

By the local existence and extension result for mild solutions, if the solution $u$ belongs to $L^q(\mathbb{R}^N)$ at time $T_{\max}$, then it can be extended to a larger time interval $[0, T_{\max} + h)$, for some $h > 0$, while remaining a solution in the same class.
This contradicts the maximality of $T_{\max}$.
Hence, the initial assumption is false, and we must have:
\begin{equation}
\text{If } T_{\max} < \infty, \text{ then } \limsup_{t \to T_{\max}^{-}} \|u(t)\|_{L^q} = +\infty.
\label{eq:blowup}
\end{equation}
On the other hand, we have
\begin{align*}
 \sup_{\tau \in(0,t)} \tau^\beta \|u(\tau,\cdot)\|_{L^q} &\leq \sup_{\tau \in(0,t)} \tau^\beta \|\mathcal{L}_{u_0}\|_{L^q} + \sup_{\tau \in(0,t)} \tau^\beta \|\mathcal{N}_u\|_{L^q} + \mathscr{M}_2  \sup_{\tau \in(0,t)}\tau^{\beta + \sigma + \alpha} \\
&\leq \mathscr{M}_0 + \mathscr{M}_1 \left(\sup_{\tau \in (0,t)} \|\tau^\beta u(\tau, \cdot)\|_{L^q}\right)^p + \mathscr{M}_2 t^{\beta + \sigma + \alpha}   
\end{align*}
where $\mathscr{M}_1 = \|G\|_{L^r} \int\limits_0^1 \tau^{-\beta p}(1-\tau)^{-(1 - \alpha + \frac{N\alpha}{2\ts q}(p-1))} d\tau$, and $\mathscr{M}_2  = \|\bw\|_{L^q} \|G\|_{L^1} \mathscr{B}(\sigma+1, \alpha).$ 

Under the conditions \eqref{C-p} and $0 < \beta < \frac{1}{p}$, which ensure the convergence of the integral $\mathscr{M}_1$, and under the hypothesis $\sigma > -\alpha$, we obtain a result that, through a bootstrap argument, yields
$$\sup_{\tau \in (0,t)} \|\tau^\beta u(\tau, \cdot)\|_{L^q} \leq 2\mathscr{M} \quad \text{for all} \quad t \in (0, T).$$
Therefore, we deduce that
$$\|u(t, \cdot)\|_{L^q(\mathbb{R}^N)} \leq 2\mathscr{M} t^{-\beta} \quad \text{for all } t \in (0, T).$$
In particular, for any $\tau \in (0, T)$
$$\sup_{t \in [0, \tau]} \|u(t, \cdot)\|_{L^q(\mathbb{R}^N)} \leq \max\left\{ \|u_0\|_{L^q(\mathbb{R}^N)},\ \sup_{\tau \in(0,T)} 2\mathscr{M} \tau^{-\beta} \right\} < \infty,$$
which contradicts \eqref{eq:blowup}. Therefore, $T_{\max} = \infty$, completing the proof.
\end{proof}
 \section{Proofs of the main results}
 \label{sec:proofs}
 \subsection{Proof of Theorem~\ref{thm:Loc-Exis}}
\label{subsec:Loc-exis}
For given $T>0$, and $u_0,\bw \in L^q(\mathbb{R}^N)$, and let $\delta$ be a sufficiently small positive constant. We define the function space  
$$Y = \left\{ u \in C\big((0,T), L^q(\mathbb{R}^N)\big), \left\|u\right\|_{L^\infty((0,T), L^q({\mathbb{R}^N})} \leq \delta \right\},$$ 
equipped with the metric $d(u,v) = \sup_{t\in[0,T]} \|u - v\|_{L^q(\mathbb{R}^N)}$, for $u, v \in Y$. Since $(Y,d)$ is a complete metric space.  

Let $ \alpha, \ts \in (0,1),$  $p > 1$, $\sigma > -1$, suppose $u_0,\bw \in L^{q}(\mathbb{R}^N)$ satisfy $
\|u_0\|_{L^{q}} + \|\bw\|_{L^{q}} < \varepsilon$
for some $\varepsilon > 0$. We define the operator $\mathcal{G}$ on $Y$ by
\begin{equation}
\label{G-Y}
\begin{split}
\mathcal{G}u(t,x) &=\int\limits_{\mathbb{R}^N} u_0(y) Z(t,x-y) dy+ \int\limits_0^t \int\limits_{\mathbb{R}^N} |u(\tau,y)|^{p} Y(t-\tau,x-y) dy d\tau\\&+\int\limits_0^t \int\limits_{\mathbb{R}^N} \tau^{\sigma} \bw(y) Y(t-\tau,x-y) dy d\tau.
\end{split}
\end{equation}
Suppose that $u \in C([0,T], L^q(\mathbb{R}^N))$ with $q > p_c$. It is then straightforward to verify that \eqref{G-Y} also belongs to $C([0,T], L^q(\mathbb{R}^N))$.

Let $u_0, \bw \in L^{q}(\mathbb{R}^N)$. Using the estimates for the kernels $Z$ and $Y$ from Lemma~\ref{lem:kernels}, for $t \in [0, T ]$, we obtain
\begin{align*}
\|\mathcal{G}u(t)\|_{L^q} &\leq \|u_0\|_{L^q(\mathbb{R}^N)}+ \left\| \int\limits_0^t \int\limits_{\mathbb{R}^N} Y(t-\tau,x-y) |u(\tau,y)|^p dy d\tau \right\|_{L^q(\mathbb{R}^N)}\\&+\left\|\int\limits_0^t \int\limits_{\mathbb{R}^N} \tau^\sigma Y(t-\tau,x-y) \bw(y) dy d\tau\right\|_{L^q(\mathbb{R}^N)}\\
&\leq \|u_0\|_{L^q}+ C \delta^p \int\limits_0^t (t-\tau)^{-\frac{\alpha N}{2\ts}\left( \frac{p-1}{q} \right) + \alpha - 1} d\tau+C \| \bw \|_{L^q(\mathbb{R}^N)} \int\limits_0^t \tau^\sigma (t-\tau)^{\alpha-1} d\tau.
\end{align*}
When $N \geq 2\ts$, the condition $q > p_c$ ensures that the exponent satisfies $\alpha-\frac{\alpha N}{2\ts}\left( \frac{p-1}{q} \right) - 1 > -1$, which guarantees the convergence of the time integral. For $N < 2\ts$, this condition is automatically satisfied for $q \geq p$. For $t \in (0,T)$, the change of variable $\tau = t\theta$ gives 
\begin{align*}
\|\mathcal{G}u(t)\|_{L^q(\mathbb{R}^N)} &\leq \|u_0\|_{L^q(\mathbb{R}^N)}+ C \delta^p t^{\alpha - \frac{\alpha N}{2\ts}\left( \frac{p-1}{q} \right)} \int\limits_0^1 (1-\theta)^{\alpha - \frac{\alpha N}{2\ts}\left( \frac{p-1}{q} \right) - 1} d\theta\\&+ C t^{\alpha+\sigma} \| \bw \|_{L^q(\mathbb{R}^N)} \int\limits_0^1 \theta^{\sigma}(1-\theta)^{\alpha-1} d\theta \\
&\leq \|u_0\|_{L^q(\mathbb{R}^N)}+ C \delta^p T^{\alpha - \frac{\alpha N}{2\ts}\left( \frac{p-1}{q} \right)}+C T^{\alpha+\sigma} \mathscr{B}\left(\sigma+1,\alpha\right) \| \bw \|_{L^q(\mathbb{R}^N)}.
\end{align*}
Hence, we can choose $T$ small enough such that
$$C \delta^p T^{\alpha - \frac{\alpha N}{2\ts}\left( \frac{p-1}{q} \right)}+C T^{\alpha+\sigma} \mathscr{B}(\sigma+1,\alpha) \| \bw \|_{L^q(\mathbb{R}^N)}\leq \delta - \|u_0\|_{L^q}$$
This shows that $\mathcal{G}$ is a mapping from $Y$ into itself. It remains to demonstrate that the self-mapping $\mathcal{G}: Y \rightarrow Y$ is a contraction to complete the proof.
Applying similar arguments as mentioned above, we can establish that for functions $u$ and $v$ in $Y$
\begin{align*}
\|\mathcal{G}(u) - \mathcal{G}(v)\|_{L^q(\mathbb{R}^N)} &\leq C \int\limits_0^t (t - \tau)^{\alpha-1-\frac{\alpha N}{2\ts}\left(\frac{p-1}{q}\right)} \||u|^p - |v|^p\|_{L^{q/p}(\mathbb{R}^N)} \, ds \\
&\leq C\int\limits_0^t (t - \tau)^{\alpha-1-\frac{\alpha N}{2\ts}\left(\frac{p-1}{q}\right)} \left(\|u\|^{p-1}_{L^q}+\|v\|^{p-1}_{L^q}\right) \|u - v\|_{L^q(\mathbb{R}^N)} \, d\tau.
\end{align*}
where, for $p > 1$, we have used the inequality  
$$\left| |u|^p - |v|^p \right| \le C |u - v| \left( |u|^{p-1} + |v|^{p-1} \right).
$$
It follows that,
\begin{align*}
\|\mathcal{G}(u) - \mathcal{G}(v)\|_{L^q(\mathbb{R}^N)}&\leq C \delta^{p-1} \int\limits_0^t (t - \tau)^{\alpha-1-\frac{\alpha N}{2\ts}\left(\frac{p-1}{q}\right)} \, d\tau \|u - v\|_Y \\
&= C\delta^{p-1} T^{\alpha-\frac{\alpha N}{2\ts}\left(\frac{p-1}{q}\right)} \int\limits_0^1 (1 - \tau)^{\alpha-1-\frac{\alpha N}{2\ts}\left(\frac{p-1}{r}\right)} \, d\tau  \|u - v\|_Y.    
\end{align*}
By the Banach fixed point theorem, there exists a unique $u \in Y$ such that $u = \mathcal{G}u$, which is the desired solution.
The uniqueness is proven using a standard Gronwall argument. Consider two mild solutions 
$u, v \in C([0, T], L^q(\mathbb{R}^N))$ of \eqref{main}. Under the parameter constraints 
$q > p_c$ and $q \geq p$, the following key estimate holds:
\begin{align*}
\|\mathcal{G}u(t)-\mathcal{G}v(t)\|_{L^q(\mathbb{R}^N)}&=\| u(t) - v(t) \|_{L^q(\mathbb{R}^N)}\\& \leq C M \int\limits_0^t (t - \tau)^{\alpha - 1 - \frac{\alpha N(p-1)}{2sq}} \| u(\tau) - v(\tau) \|_{L^q(\mathbb{R}^N)}  d\tau,
\end{align*}
where $M = \sup_{t \in [0, T]} \left( \| u(t) \|_{L^q}^{p-1} + \| v(t) \|_{L^q}^{p-1} \right)$ is a finite constant. Since the exponent $$\alpha - 1 - \frac{\alpha N(p-1)}{2\ts q} > -1,$$ the conditions for the fractional Gronwall inequality \eqref{lem:singular-Gronwall} are satisfied , from which we conclude $u \equiv v$ on $[0, T]$.

\subsection{Proof of Theorem~\ref{thm:blowup}} Let $u$ be a global weak solution to problem \eqref{main}, so that $u \in (0, \infty)$. For given positive numbers $T$ and $R$ satisfying $T R^{2\ts/\alpha} < \infty$, we can choose $T^{*} > T R^{2\ts/\alpha}$.

To establish a contradiction, we proceed with a carefully chosen test function argument. We begin by introducing a smooth cutoff function $\psi \in \mathbf{C}_{0}^{\infty}([0,\infty))$ with the following properties
\begin{equation*}
0 \leqslant \psi \leqslant 1 \quad \text{and} \quad \psi(\tau) = 
\begin{cases} 
1 & \text{if } 0\leq \tau\leq 1, \\
0 & \text{if } \tau \geq 2.
\end{cases}
\end{equation*}
The core of our argument relies on the rescaled test function
$$
\psi_R(t,x) = \psi\left(\frac{t}{R^{2 \ts/\alpha}}\right)\psi\left(\frac{|x|}{R}\right), \quad R > 0.
$$
By multiplying problem \eqref{main} by $\psi_R$, applying the integration by parts formula from Lemmas \eqref{IPP} and \eqref{I-F},we exploit the properties of $\psi$, we derive the desired contradiction.
\begin{align}
\label{EQ}
&\int\limits_0^T \int\limits_{\mathbb{R}^N} u_0(x) \, \cd_{t|T}^{\alpha} \psi_R(t,x) \,dx\,dt + \int\limits_0^T \int\limits_{\mathbb{R}^N} |u|^p \psi_R(t,x) \,dx\,dt + \int\limits_0^T \int\limits_{\mathbb{R}^N} t^{\sigma} \bw \, \psi_R(t,x) \,dx\,dt \\
&\nonumber= \int\limits_0^T \int\limits_{\mathbb{R}^N} u (-\Delta)^{s} \psi_R(t,x) \,dx\,dt + \int\limits_0^T \int\limits_{\mathbb{R}^N} u \, \cd_{t|T}^{\alpha} \psi_R(t,x) \,dx\,dt.
\end{align}
For our analysis, we define the following key integral quantities $I=\int\limits_0^T \int\limits_{\mathbb{R}^N} |u| |(-\Delta)^{\ts} \psi_R(t,x)| \,dx\,dt,$ and
$J=\int\limits_0^T \int\limits_{\mathbb{R}^N}  |u| |\cd_{t|T}^{\alpha} \psi_R(t,x)| \,dx\,dt$. To derive suitable bounds for both $I$ and $J$, we apply the $\varepsilon$-Young inequality and obtain the estimates
\begin{align*}
I &\leq \frac{1}{2} \int\limits_0^T \int\limits_{\mathbb{R}^N}  |u|^p \psi_R(t,x) \,dx\,dt+C\int\limits_0^{TR^{\frac{2s}{\alpha}}} \psi\left(\frac{t}{R^{2 \ts/\alpha}}\right) \,dt \int\limits_{\mathbb{R}^N} \left|(-\Delta)^{\ts} \psi\left(\frac{|x|}{R}\right)\right|^{\frac{p}{p-1}} \left|\psi\left(\frac{|x|}{R}\right)\right|^{-\frac{1}{p-1}}\,dx \\
&\leq \frac{1}{2} \int\limits_0^T \int\limits_{\mathbb{R}^N}  |u|^p \psi_R(t,x) \,dx\,dt+C R^{N+\frac{2 \ts}{\alpha}-\frac{2\ts p}{p-1}}.
\end{align*}
Parallel reasoning applied to the second integral, using the Riemann-Liouville fractional derivative definition \eqref{RL-FD}, produces
\begin{align*}
J &\leq \frac{1}{2} \int\limits_0^{T} \int\limits_{\mathbb{R}^N}  |u|^p \psi_R(t,x) \,dx\,dt+ \int\limits_0^{TR^{\frac{2\ts}{\alpha}}} \left|\cd_{t|TR^{\frac{\ts}{\alpha}}}^{\alpha}\psi\left(\frac{t}{R^{\frac{2\ts}{\alpha}}}\right)\right|^{\frac{p}{p-1}}\psi\left(\frac{t}{R^{2\ts/\alpha}}\right)^{-\frac{1}{p-1}} \,dt \int\limits_{\mathbb{R}^N} \psi\left(\frac{|x|}{R}\right) \,dx \\
&\leq \frac{1}{2} \int\limits_0^T \int\limits_{\mathbb{R}^N}  |u|^p \psi_R(t,x) \,dx\,dt+C R^{N+\frac{2 \ts}{\alpha}-\frac{2\ts p}{p-1}}.
\end{align*}
Since $\psi$ is non-increasing, the function $t \mapsto \psi(t/R^{2s/\alpha})$ is non-increasing, and therefore $\cd_{t|T}^{\alpha}\psi_R(t, x) \geq 0$ for $T$ sufficiently large. As $u_0(x) \geq 0$, the integrand is nonnegative, so
$$\int\limits_0^T \int\limits_{\mathbb{R}^N} u_0(x) \, \cd_{t|T}^{\alpha} \psi_R(t,x) \, dx \, dt \geq 0,$$
together with the previously established estimates, implies that
\begin{eqnarray*}
\int\limits_0^{TR^{\frac{2\ts}{\alpha}}} t^{\sigma} \psi\left(\frac{t}{R^{\frac{2\ts}{\alpha}}}\right)\, dt \int\limits_{\mathbb{R}^N} \bw(x) \psi\left(\frac{|x|}{R}\right)\, dx \leqslant C R^{N+\frac{2\ts}{\alpha}-\frac{2\ts p}{p-1}}
\end{eqnarray*}
On the other hand, we take into account that $\bw$ is in $L^1$. Then, for $R$ sufficiently large, we obtain by the Dominated Convergence Theorem that
$\int\limits_{\mathbb{R}^N} \bw(x) \psi\left(\frac{|x|}{R}\right) d x \longrightarrow \int\limits_{\mathbb{R}^N} \bw(x) d x\geq 0$
Moreover, we can bound the second term on the right-hand side of \eqref{EQ} from below as follows
\begin{align*}
\int\limits_0^{TR^{\frac{2\ts}{\alpha}}}\int\limits_{\mathbb{R}^N} \bw(x) \psi_R (t,x)dxdt &\geqslant \int\limits_{\frac{TR^{\frac{2\ts}{\alpha}}}{2}}^{TR^{\frac{2\ts}{\alpha}}} t^{\sigma} \psi\left( \frac{t}{R^{\frac{2\ts}{\alpha}}}\right) d t \int\limits_{\mathbb{R}^N} \psi\left( \frac{|x|} {R}\right) \bw(x) \, dx \\
&\geq R^{\frac{2\ts}{\alpha}(\sigma+1)}\int\limits_{\frac{1}{2}}^1 \tau^{\sigma} \psi\left( \tau\right) d \tau \int\limits_{\mathbb{R}^N} \psi\left( \frac{|x|} {R}\right) \bw(x)\,dx.
\end{align*}
More precisely, we obtain
\begin{align*}
\int_{\mathbb{R}^N}  \bw(x)\,dx \leq C R^{\gamma}, \text{where} \quad \gamma = N - \frac{2\ts p}{p-1} - \frac{2\ts\sigma}{\alpha}.
\end{align*}
Observing that  $1 < p < \frac{N\alpha-2\ts\sigma}{N\alpha-2\ts(\alpha+\sigma)}$ and letting $R\to \infty$, we obtain $\int_{\mathbb{R}^N} \bw(x)  dx \leq 0,$
which is clearly a contradiction. This completes the proof.
\subsection{Proof of Theorem~\ref{thm:glob-exis-1}}
\label{subsec:glob-1}
The global existence result of the theorem in the case without a forcing term has been studied in \cite{KLT}. In that setting, the authors establish their result by employing a contradiction argument combined with suitable test functions and scaling techniques, through which they identify the critical exponents governing global existence.

In this section, we extend the analysis to include the presence of a forcing term. The critical exponent $p_{c}$ defined in \eqref{p-c}, together with the exponent $r_{c}$ appearing in Theorem~\ref{thm:glob-exis-1}, plays a fundamental role in the development of the global existence analysis presented here.

We can select a positive constant $q$ that satisfies:
\begin{equation}
\label{Condition-q}  
\max\left(\frac{2\ts}{Np(p-1)},\frac{2\ts\alpha+2s\sigma(p-1)}{N\alpha(p-1)}\right)<\frac{1}{q}<\min \left(\frac{2\ts}{N(p-1)},\frac{N \alpha-2\ts(\alpha+\sigma)}{N\alpha-2\ts\sigma}\right), \quad q\geq p
\end{equation}
Moreover, the assumptions $\alpha \in (0,1)$, $p>p^*= \frac{N\alpha-2\ts\sigma}{N\alpha-2\ts(\alpha+\sigma)}$ and $-\alpha < \sigma < 0$ imply that $p_c, {r_c} >1$. We now establish a sufficient condition for the existence of a global solution. Specifically, we prove that there exists a constant $\epsilon > 0$, depending solely on $p, q, \alpha, \sigma$, and $N$, such that for any initial data $u_0 \in L^{p_c}$ and $\bw \in L^{r_c}$ satisfying the smallness condition $\|u_0\|_{L^{p_c}} + \|\bw\|_{L^{r_c}} < \epsilon$, a global solution exists. Here, $q$ is considered a fixed parameter, and we define
\begin{equation*}  
\beta-\frac{N \alpha}{2\ts}\left(\frac{1}{p_c}-\frac{1}{q}\right)=\alpha-\beta(p-1)-\frac{\alpha N}{2\ts}\left(\frac{p-1}{q}\right)=\beta+\alpha-\frac{N \alpha}{2\ts}\left(\frac{1}{{r_c}}-\frac{1}{q}\right)+\sigma=0 
\end{equation*}
\begin{equation*}
\beta= \frac{\alpha}{p-1}-\frac{\alpha N}{2q\ts}   \end{equation*}
It is straightforward to verify from condition \eqref{Condition-q} that $0 < \beta p \leq \alpha < 1$, where the last inequality is strict when $\alpha \in (0,1)$. Since $q \geq p$, this further implies $0 < \beta \leq \frac{1}{q}$, which is the required bound. 
Now, let $\delta$ be a sufficiently small positive constant. We define the function space $Y$ as follows
$$
Y = \left\{ u \in  L_{Loc}^\infty((0,\infty),L^q(\mathbb{R}^N)) \; ; \: 
 \sup_{t>0} t^{\beta}\|u(t)\|_{L^q} \leqslant \delta \right\}$$
with the distance defined as $d(u, v)=\sup_{t>0} t^{\beta}\|u-v\|_{L^q} $, it is evident that $(Y,d)$ forms a complete metric space. We will demonstrate that the equation
\begin{equation*}
\begin{aligned}
\mathcal{G}u(t,x) = & \int\limits_{\mathbb{R}^N} u_0(y) Z(t,x-y)  dy + \int\limits_0^t \int\limits_{\mathbb{R}^N} |u(y,\tau)|^{p} Y(t-\tau,x-y)  dy  d\tau\\& + \int\limits_0^t \int\limits_{\mathbb{R}^N} \tau^{\sigma}\bw(y) Y(t-\tau,x-y)  dy  d\tau.
\end{aligned}
\end{equation*}
has a unique fixed point in $Y$.
Assuming $u_0 \in L^{p_c}$ and using Lemma \ref{lem:kernels}, we select $r$ satisfying  
$1 - \frac{1}{r} = \frac{1}{p_c} - \frac{1}{q}$. From condition~\eqref{Condition-q}, we observe that $1 - \frac{1}{r} \in \big(0, \frac{2\ts}{N}\big)$, which implies
\begin{align*}
t^{\beta} \|\mathcal{L}_{u_0}(t,\cdot) \|_{L^q(\mathbb{R}^N)} &\leq \| u_0 \|_{L^{p_c}(\mathbb{R}^N)} \| Z(t, \cdot) \|_{L^r(\mathbb{R}^N)}\\
&\leq t^{\beta-\frac{N\alpha}{2\ts}\left(1-\frac{1}{r}\right)}\|u_0\|_{L^{p_c}}\|F\|_{L^r}:=\eta \|F\|_{L^{r}}.
\end{align*}
In the same manner, we choose $m$ such that
$\frac{1}{m} = 1 - \frac{p}{q} + \frac{1}{q}.$
Since $p \geq 1$ and $q>\frac{N\alpha}{N\alpha-2\ts(\alpha+\sigma)}$, we have $0 \leq 1 - \frac{1}{m} < \frac{2\ts}{N}$, or equivalently, $m \in [1, \frac{N}{N-2s\ts})$. 
Hence, \eqref{lem:kernels} applies and yields the estimate $1 - \alpha + \frac{N\alpha}{2\ts}\left(1 - \frac{1}{m}\right) < 1.$
Thus, for $0 \leq t \leq t_0$,
\begin{align*}
\left\| \mathcal{N}(u)(t, \cdot) \right\|_{L^q(\mathbb{R}^N)} 
&= \int\limits_0^t \left\| |u(s, \cdot)|^p \right\|_{L^{\frac{q}{p}}(\mathbb{R}^N)} \left\| Y(t-\tau,\cdot) \right\|_{L^m(\mathbb{R}^N)}  d\tau \\
&\leq \left\| G \right\|_{L^m(\mathbb{R}^N)} \sup_{\tau>0} \tau^{\beta p}\left\| u(\tau, \cdot) \right\|_{L^q(\mathbb{R}^N)}^p \int\limits_0^t \tau^{-\beta p}(t-\tau)^{-(1-\alpha+\frac{N\alpha}{2\ts q}(p-1))}  d\tau \\
&=\delta^p t^{\alpha-\frac{N\alpha}{2\ts q}(p-1)-\beta p}\int\limits_0^1 \tau^{-\beta p}(1-\tau)^{-(1-\alpha+\frac{N\alpha}{2\ts q}(p-1))} d\tau \left\| G \right\|_{L^m(\mathbb{R}^N)}\\
&=\delta^p t^{-\beta} \mathscr{B}\left(1-\beta p, \alpha - \frac{\alpha N}{2\ts}\left(\frac{p-1}{q}\right)\right)\left\| G \right\|_{L^m(\mathbb{R}^N)} 
\end{align*}
Here, $\mathscr{B}$ represents the beta function. We observe that by \eqref{Condition-q} and the fact that $\beta p \leqslant \beta q< 1$, $\mathscr{B} $ is well-defined.

Proceeding similarly, we define $1+\frac{1}{q} = \frac{1}{{r_c}} + \frac{1}{\varrho}$. 
The condition on $q$ given in \eqref{Condition-q} ensures that $1 - \frac{1}{\varrho} \in (0, \frac{2\ts}{N})$. 
Given that $\sigma > -1$, we may use lemma \eqref{lem:kernels} to obtain
\begin{align*}
\|\mathcal{S}_\bw(t, \cdot)\|_{L^q(\mathbb{R}^N)} &\leq \|\bw\|_{L^{{r_c}}(\mathbb{R}^N)} \|G\|_{L^\varrho} \int\limits_0^t \tau^{\sigma} (t-\tau)^{-(1-\alpha+\frac{N\alpha}{2\ts}(1-\frac{1}{\varrho}))} d\tau \\
&\leq t^{\alpha-\frac{N\alpha}{2\ts}(1-\frac{1}{\varrho})+\sigma} \|\bw\|_{L^{{r_c}}(\mathbb{R}^N)} \|G\|_{L^\varrho} \int\limits_0^1 \tau^{\sigma} (1-\tau)^{\alpha-1-\frac{N\alpha}{2\ts}(1-\frac{1}{\varrho})} d\tau \\
&= t^{-\beta} \mathscr{B}\left(\sigma+1, \alpha-\frac{N\alpha}{2\ts}(1-\frac{1}{\varrho})\right) \|\bw\|_{L^{{r_c}}(\mathbb{R}^N)} \|G\|_{L^\varrho(\mathbb{R}^N)}.
\end{align*}
Therefore, based on the estimates above, choosing sufficiently small $\epsilon$ and $\delta>0$ leads to the conclusion that
\begin{equation*}
\sup_{t>0} t^\beta \| (\mathcal{G} u)(t) \|_{L^r} \leq C \left( \| u_0 \|_{L^{p_c}} + \delta^{p} + \| \bw \|_{L^{{r_c}}} \right) \leqslant C \epsilon
\end{equation*}
This shows that $\mathcal{G}$ is a mapping from $Y$ into itself. It remains to demonstrate that the self-mapping $\mathcal{G}: Y \rightarrow Y$ is a contraction to complete the proof.
By applying similar arguments as mentioned above, we can establish that for functions $u$ and $v$ in $L_{Loc}^{\infty}((0,\infty), L^q(\mathbb{R}^N))$
\begin{align*}
t^\beta \|\mathcal{G}(u) - \mathcal{G}(v)\|_{L^q(\mathbb{R}^N)} &\leq Ct^\beta \|G\|_{L^{r}} \int\limits_0^t (t - \tau)^{\alpha-1-\frac{\alpha N}{2\ts}\left(\frac{p-1}{q}\right)} \|u^p - v^p\|_{L^{\frac{q}{p}}(\mathbb{R}^N)} \, d\tau \\
&\leq C \|G\|_{L^{r}} t^\beta \int\limits_0^t (t - \tau)^{\alpha-1-\frac{\alpha N}{2\ts}\left(\frac{p-1}{q}\right)} \left(\|u\|^{p-1}_{L^q}+\|v\|^{p-1}_{L^q}\right) \|u - v\|_{L^q(\mathbb{R}^N)} \, d\tau \\
&\leq Ct^\beta \|G\|_{L^{r}} \delta^{p-1} \int\limits_0^t (t - \tau)^{\alpha-1-\frac{\alpha N}{2\ts}\left(\frac{p-1}{q}\right)} \tau^{-p\beta} \, d\tau \|u - v\|_Y \\
&= C\delta^{p-1} t^{\beta-p\beta-\alpha-\frac{\alpha N}{2\ts}\left(\frac{p-1}{q}\right)} \int\limits_0^1 (1 - \tau)^{\alpha-1-\frac{\alpha N}{2\ts}\left(\frac{p-1}{q}\right)} \tau^{-p\beta} \, d\tau  \|u - v\|_Y \\
&= C\delta^{p-1} \mathscr{B}\left(1-p\beta,\alpha-\frac{\alpha N}{2\ts}\left(\frac{p-1}{q}\right)\right)\|u - v\|_Y.
\end{align*}
Hence, $\mathcal{G}$ is clearly a contraction on $Y$. Thus, by the contraction mapping principle, $\mathcal{G}$ has a unique fixed point $u$ in $Y$ which is the desired  solution.
\subsection{Proof of Theorem~\ref{thm:glob-exis-2}}
\label{subsec:glob-2}
The proof consists of verifying that the following inequality from Lemma~\ref{lem:W-Condition} holds:
$$t^\beta \left( \|\mathcal{L}_{u_0}(t, \cdot)\|_{L^q} + \|\mathcal{S}_\bw(t, \cdot)\|_{L^q} \right) \leq \mathscr{M} \quad \text{for all } t > 0,$$
where $\beta = \frac{\alpha}{p-1} - \frac{N\alpha}{2\ts q}.
$ We proceed considering cases $ t > R_0$ and $t \leq R_0$ separately.
For $ t \leq R_0 $, we exploit the contraction properties of the kernels $ Z $  and $ Y $ given in Lemma \ref{lem:kernels} to obtain $ \|\mathcal{L}_{u_0}(t,\cdot)\|_{L^q} \leq \|u_0\|_{L^q} $. For the source term, Young's inequality and the bound $ \|Y(t,\cdot)\|_{L^1} \leq C t^{\alpha-1} $ yield
$$
\|\mathcal{S}_\bw(t,\cdot)\|_{L^q} \leq \|\bw\|_{L^q} \int\limits_0^t \tau^\sigma \|Y(t-\tau,\cdot)\|_{L^1}  d\tau \leq C \|\bw\|_{L^q} t^{\alpha+\sigma}.
$$
Choosing
$$
R_0 = \min\left\{ \left( \frac{\mathscr{M}}{2\|u_0\|_{L^q}} \right)^{1/\beta}, \left( \frac{\mathscr{M}}{2C\|\bw\|_{L^q}} \right)^{1/(\beta+\alpha+\sigma)} \right\},
$$
we find that
$$
t^\beta \|\mathcal{L}_{u_0}(t,\cdot)\|_{L^q} \leq R_0^\beta \|u_0\|_{L^q} \leq \frac{\mathscr{M}}{2}, \quad \text{and} \quad t^\beta \|\mathcal{S}_\bw(t,\cdot)\|_{L^q} \leq C R_0^{\beta+\alpha+\sigma} \|\bw\|_{L^q} \leq \frac{\mathscr{M}}{2}.
$$
This establishes the desired bound on the interval $ [0, R_0] $.
For $t > R_0$, we analyze the behavior by decomposing the spatial domain into three regions:
\begin{align*}
\mathcal{E}_1 &= \{ y \in \mathbb{R}^N : |y| > M t^{\frac{\alpha}{2\ts}} \}, \\
\mathcal{E}_2 &= \{ y \in \mathbb{R}^N : |y| < M t^{\frac{\alpha}{2\ts}},\ |x-y| < \delta t^{\frac{\alpha}{2\ts}} \}, \\
\mathcal{E}_3 &= \{ y \in \mathbb{R}^N : |y| < M t^{\frac{\alpha}{2\ts}},\ |x-y| > \delta t^{\frac{\alpha}{2\ts}} \}.
\end{align*}
We decompose the linear part as $\mathcal{L}_{u_0} = \mathcal{L}_{u_0,1} + \mathcal{L}_{u_0,2} + \mathcal{L}_{u_0,3}$, where each term corresponds to integration over $\mathcal{E}_1$, $\mathcal{E}_2$, $\mathcal{E}_3$ respectively. Similarly, we write $\mathcal{S}_{\mathbf{\bw}} = \mathcal{S}_{\mathbf{\bw,1}} + \mathcal{S}_{\mathbf{\bw,2}} + \mathcal{S}_{\mathbf{\bw,3}}$, with
$$\mathcal{S}_{\mathbf{\bw,j}}(t,x) = \int\limits_0^t \tau^\sigma \int\limits_{\mathcal{E}_j} \mathbf{\bw}(y) Y(t-\tau, x-y)  dy d\tau, \quad j={1,2,3}.$$
We now estimate each term separately. For $\mathcal{L}_{u_0,1}$, using \eqref{H1} and the kernel estimate $\|Z(t,\cdot)\|_{L^r} = \|F\|_{L^r} t^{-\frac{N\alpha}{2\ts}(1-\frac{1}{r})}$, we obtain
$$t^\beta \|\mathcal{L}_{u_0,1}(t,\cdot)\|_{L^q} \leq t^\beta \|u_0\|_{L^{p_c}(\mathcal{E}_1)} \|Z(t,\cdot)\|_{L^r} \le \frac{\mathscr{M}}{6}.$$
For $\mathcal{S}_{\bw,1}$, condition \eqref{H2} and the bound $\|Y(t,\cdot)\|_{L^r} \le C t^{1-\alpha-\frac{N \alpha}{2\ts}(1-\frac{1}{r})}$ give
$$t^\beta \|\mathcal{S}_{\bw,1}(t,\cdot)\|_{L^q} \le t^\beta \|\bw\|_{L^{p_c}(\mathcal{E}_1)} \int\limits_0^t \tau^\sigma \|Y(t-\tau,\cdot)\|_{L^r} d\tau \leq \frac{\mathscr{M}}{6}.$$
The local terms $\mathcal{L}_{u_0,2}$ and $\mathcal{S}_{\mathbf{\bw,2}}$ are controlled via \eqref{H3} and the local boundedness of $F$ and $G$:
$$t^\beta \|\mathcal{L}_{u_0,2}(t,\cdot)\|_{L^q} \leq \frac{\mathscr{M}}{6}, \quad t^\beta \|\mathcal{S}_{\mathbf{\bw,2}}(\cdot,t)\|_{L^q} \leq \frac{\mathscr{M}}{6}.$$
Finally, the nonlocal terms $\mathcal{L}_{u_0,3}$ and $\mathcal{S}_{\mathbf{\bw,3}}$ are estimated using \eqref{A1} and \eqref{A2}, which exploit the tail behavior of $F$ and the averaged smallness of $u_0$ and $\mathbf{\bw}$:
$$t^\beta \|\mathcal{L}_{u_0,3}(t,\cdot)\|_{L^q} \leq \frac{\mathscr{M}}{6}, \quad t^\beta \|\mathcal{S}_{\mathbf{\bw,3}}(t,\cdot)\|_{L^q} \leq \frac{\mathscr{M}}{6}.$$
Summing these  bounds yields 
\begin{equation*}
t^\beta \left(\|\mathcal{L}_{u_0}\|_{L^q} + \|\mathcal{S}_{\bw}\|_{L^q} \right) \leq \mathscr{M} \quad \text{for } t > R_0.
\end{equation*}
The global existence of the solution follows from Lemma \ref{lem:W-Condition}, which concludes the proof.
\section{Numerical simulation}
\label{sec:num}

In this section, we present numerical simulations illustrating the qualitative
behavior predicted by Theorem~\ref{thm:blowup}.
We focus on the one-dimensional fractional diffusion equation
\begin{equation}\label{eq:num}
\partial_t^{1/2} u(t,x) + (-\Delta)^{1/4} u(t,x)
= |u(t,x)|^p + e^{-|x|^2},
\qquad (t,x)\in (0,\infty)\times\mathbb{R},
\end{equation}
which corresponds to the general model studied in this paper with
$\alpha=\tfrac12$, $s=\tfrac14$ and $\sigma=0$.
The forcing term $e^{-|x|^2}$ is nonnegative, localized in space, and satisfies
$\int_{\mathbb{R}} e^{-|x|^2}\,dx>0$, so that all assumptions of
Theorem~\ref{thm:blowup} are fulfilled.

In this setting, the critical exponent defined in
Theorem~\ref{thm:blowup} reads
\[
p^*=\frac{N\alpha-2s\sigma}{N\alpha-2s(\alpha+\sigma)}=2,
\]
and therefore the theorem predicts the nonexistence of global weak solutions
for $1<p<p^*$.

We numerically compare a subcritical case $p=1.5<p^*$ with a supercritical case
$p=2.5>p^*$ in order to visualize the sharpness of this threshold.
\FloatBarrier

\begin{figure}[!ht]
  \centering
  \includegraphics[width=0.8\textwidth]{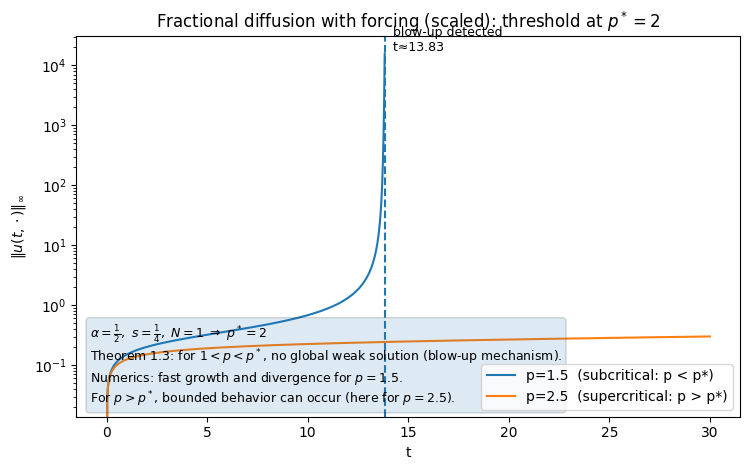}
  \caption{Time evolution of $\|u(t,\cdot)\|_{L^\infty(\mathbb{R})}$ for
  $\alpha=\tfrac12$ and $s=\tfrac14$.}
  \label{fig:blowup-time}
\end{figure}

\begin{figure}[!ht]
  \centering
  \includegraphics[width=0.8\textwidth]{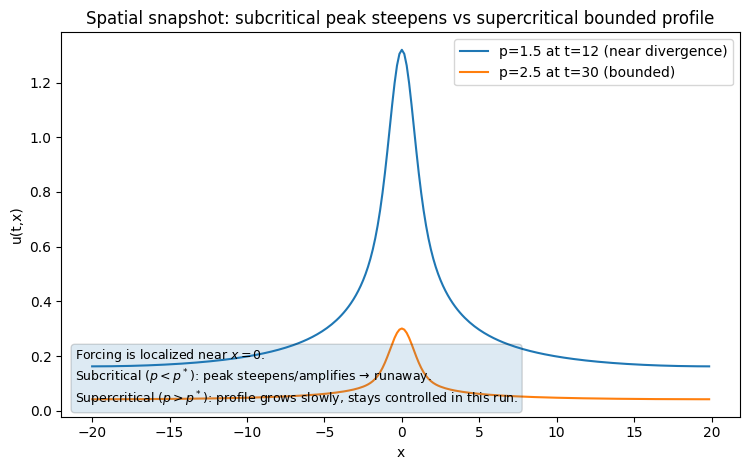}
  \caption{Spatial profiles $u(t,x)$ at representative late times for
  $\alpha=\tfrac12$ and $s=\tfrac14$.}
  \label{fig:blowup-space}
\end{figure}

\FloatBarrier

Figure~\ref{fig:blowup-time} displays the time evolution of the supremum norm
$\|u(t,\cdot)\|_{L^\infty(\mathbb{R})}$.
For $p=1.5<p^*$, the numerical solution grows rapidly and becomes unbounded
in finite time, which is consistent with the blow-up (or nonexistence)
mechanism established in Theorem~\ref{thm:blowup}.
In contrast, for $p=2.5>p^*$, the solution remains bounded throughout the
simulation, indicating a fundamentally different qualitative behavior.

Figure~\ref{fig:blowup-space} complements this observation by highlighting the
spatial mechanism associated with the blow-up.
In the subcritical regime, the forcing induces a strong concentration near
$x=0$, whose amplitude increases rapidly and leads to loss of boundedness.
For $p>p^*$, the solution exhibits a much milder growth and no such
concentration is observed within the simulated time range.

We stress that these numerical results do not constitute a proof of either
blow-up or global existence.
Rather, they provide a qualitative visualization of the critical threshold
identified in Theorem~\ref{thm:blowup}.
In particular, while the theorem guarantees nonexistence of global weak
solutions for $1<p<p^*$ under the stated assumptions, it does not exclude
blow-up for $p>p^*$ when the forcing is sufficiently large, nor does it assert
global existence without additional conditions, such as the smallness
assumptions appearing in Theorem~\ref{thm:glob-exis-1}.

\vspace{1cm}

\hrule 

\vspace{0.3cm}
\noindent{\bf\large Declarations.} {\em On behalf of all authors, the corresponding author states that there is no conflict of interest. No data-sets were generated or analyzed during the current study.}
	\vspace{0.3cm}
 \hrule

\end{document}